\documentclass{amsart}
\usepackage[utf8]{inputenc}
\usepackage{amssymb,amsmath,enumerate,amsthm,enumitem,colonequals}
%amssymb, amsmath, enumerate, amsthm, cite, colonequals, stmaryrd, graphicx, fancyvrb, fancyhdr, indentfirst, setspace, enumitem
%\usepackage{microtype}
\usepackage[mathcal]{eucal}
\usepackage[margin=1in]{geometry}

\makeatletter
\@namedef{subjclassname@2020}{\textup{2020} Mathematics Subject Classification}
\makeatother

\usepackage{hyperref}
\hypersetup{
 colorlinks=true,
 linkcolor=blue,
 filecolor=magenta, 
 urlcolor=cyan,
}
\def\eqref#1{\textcolor{blue}{(\ref{#1})}}

\PassOptionsToPackage{hyphens}{url}\usepackage{hyperref}

%%% Overfill notice --------------------------------------------------

\usepackage[pagewise]{lineno}
\overfullrule = 100pt
\let\oldequation\equation
\let\oldendequation\endequation
\renewenvironment{equation}{\linenomathNonumbers\oldequation}{\oldendequation\endlinenomath}
\expandafter\let\expandafter\oldequationstar\csname equation*\endcsname
\expandafter\let\expandafter\oldendequationstar\csname endequation*\endcsname
\renewenvironment{equation*}{\linenomathNonumbers\oldequationstar}{\oldendequationstar\endlinenomath}
\let\oldalign\align
\let\oldendalign\endalign

\expandafter\let\expandafter\oldalignstar\csname align*\endcsname
\expandafter\let\expandafter\oldendalignstar\csname endalign*\endcsname
\renewenvironment{align*}{\linenomathNonumbers\oldalignstar}{\oldendalignstar\endlinenomath}

%%% TIKZ-CD --------------------------------------------------
\usepackage{tikz-cd}

%%% MATHSYMBOLS ----------------------------------------------
\makeatletter

\makeatother

%%% THEOREM STYLES --------------------------------------------------
\makeatletter
\makeatother

\newcounter{intro}
\newcounter{result}

\newtheorem{introthm}[intro]{Theorem}

\newtheorem{introcor}[intro]{Corollary}

\theoremstyle{definition}
\newtheorem{theorem}[result]{Theorem}
\newtheorem{lemma}[result]{Lemma}
\newtheorem{proposition}[result]{Proposition}
\newtheorem{corollary}[result]{Corollary}
\newtheorem{definition}[result]{Definition}
\newtheorem{example}[result]{Example}
\newtheorem{remark}[result]{Remark}

\newtheorem*{ack}{Acknowledgements}

\setcounter{secnumdepth}{2}
\numberwithin{equation}{section}
\numberwithin{result}{section}

\title[Approximability, Rouquier dimension, noncommutative algebras]{Approximability and Rouquier dimension \\ for noncommutative algebras over schemes}

\author[T.~De Deyn]{Timothy De Deyn}
\address{T.~De Deyn,
School of Mathematics and Statistics,
University of Glasgow, 
Glasgow G12 8QQ,
United Kingdom}
\email{timothy.dedeyn@glasgow.ac.uk}

\author[P.~Lank]{Pat Lank}
\address{P.~Lank,
Dipartimento di Matematica “F. Enriques”, Universit\`{a} degli Studi di Milano, Via Cesare
Saldini 50, 20133 Milano, Italy}
\email{plankmathematics@gmail.com}

\author[K. ~Manali Rahul]{Kabeer Manali Rahul}
\address{K. ~Manali Rahul,
Center for Mathematics and its Applications, 
Mathematical Science Institute, Building 145, 
The Australian National University, 
Canberra, ACT 2601, Australia}
\email{kabeer.manalirahul@anu.edu.au}

\date{\today}

\keywords{noncommutative algebraic geometry, derived categories, Rouquier dimension, approximable triangulated categories}

\subjclass[2020]{14A22 (primary), 14A30, 14F08, 16E35, 18G80} 

% 14A22 Noncommutative algebraic geometry

% 14A30 Fundamental constructions in algebraic geometry involving higher and derived categories (homotopical algebraic geometry, derived algebraic geometry, etc.)

% 14F08 Derived categories of sheaves, dg categories, and related constructions in algebraic geometry

% 16E35 Derived categories and associative algebras

% 18G80 Derived categories, triangulated categories

\begin{document}
    
\begin{abstract}
    This work is concerned with approximability (\`{a} la Neeman) and Rouquier dimension for triangulated categories associated to noncommutative algebras over schemes. Amongst other things, we establish that the category of perfect complexes of a Noetherian quasi-coherent algebra over a separated Noetherian scheme is strongly generated if, and only if, there exists an affine open cover where the algebra has finite global dimension. As a consequence, we solve an open problem posed by Neeman. Further, as a first application, we study the existence of generators for Azumaya algebras.
\end{abstract}

\maketitle
% \tableofcontents

%%%%%%%%%%%%%%%%%%%%%%%%%%%%%%%%%%%%
\section{Introduction}
\label{sec:intro}
%%%%%%%%%%%%%%%%%%%%%%%%%%%%%%%%%%%%

This work contributes to approximation and generation in triangulated categories constructed from noncommutative algebras over schemes. Thereby, it enables the study of objects within triangulated categories that arise naturally in noncommutative contexts by leveraging techniques from better understood commutative settings. A compelling consequence is the accessibility of potent machinery in noncommutative flavors of algebraic geometry, yielding interesting applications. For example, we are able to prove a problem posed by Neeman in \cite{Neeman:2021} and exceed well beyond its scope, see Theorem~\ref{introthm:coherent_alg_perf_strong_gen}.

Recently \textit{approximable triangulated categories}, introduced by Neeman in \cite{Neeman:2021} and \cite{Neeman:2021b}, have played a crucial role in resolving significant open problems in algebraic geometry, see \cite{Neeman:2021, Neeman:2021b, Neeman:2022}. Roughly speaking, approximability allows one to understand arbitrary objects in a triangulated category by approximations from simpler ones. 
Our work, which we relay below, was partly motivated by wanting to extend these approximation tools to a noncommutative set-up.

Noncommutative algebraic geometry can be viewed as both the geometric study of abstract noncommutative structures and as the application of these abstract structures to geometric spaces.
It is a broad field connecting many others such as (non)commutative algebra, algebraic geometry, representation theory and mathematical physics. 
Various different approaches to this exist and trying to give a good overview of all of these would only result in us forgetting some, hence we refrain from doing so. 

In this work we opt for a `mild' noncommutative approach.
This consists in studying quasi-coherent sheaves of noncommutative algebras over schemes. 
We refer to these as `(mild) noncommutative schemes', cf.\ Section~\ref{sec:algebras} for details. 
Examples of where these have appeared earlier are e.g.\ \cite{VandenBergh:2004flop, Yekutieli/Zhang:2006, Kuznetsov:2006, Burban/Drozd/Gavran:2017} (this list is far from exhaustive).
Throughout our work, we observe interactions between the properties of the noncommutative scheme and the underlying (commutative) scheme, which stimulates our chosen perspective. 

To set the stage, briefly recall the different notions of generators for triangulated categories, as introduced in \cite{Bondal/VandenBergh:2003}.
An object $G$ in a triangulated category $\mathcal{T}$ is called a \textit{classical generator} if any other object in $\mathcal{T}$ is obtained from $G$ by taking shifts, finite coproducts, retracts and cones.
If all object in $\mathcal{T}$ can be finitely built by taking at most $n$ cones, then $G$ is called a \textit{strong generator}. The \textit{Rouquier dimension} for $\mathcal{T}$ is the minimal such $n$ among the strong generators. 
In addition, when $\mathcal{T}$ admits small coproducts, it is often convenient to make use of arbitrary small coproducts in the definition of a strong generator, allowing this yields what is known as a  \textit{strong $\oplus$-generator}
See Section~\ref{sec:generation} for details and Examples~\ref{ex:strong_generator_examples} and \ref{ex:NCexamples} for an overview of a few existence results in the literature.

The existence of generators in a triangulated categories has many pleasant consequences. 
This includes representability theorems for cohomological functors when strong generators exist \cite{Rouquier:2008}, as well as implications for geometric properties; for instance, the bounded derived category of coherent sheaves $D^b_{\operatorname{coh}}(X)$ admitting a classical generator for a Noetherian scheme $X$ tells us openness of its regular locus \cite{Iyengar/Takahashi:2019, Dey/Lank:2024a}. Recently, it has also been demonstrated that generation in $D^b_{\operatorname{coh}}(X)$ can be utilized to provide triangulated characterizations of various singularities arising in birational geometry \cite{Lank/Venkatesh:2024,Lank/McDonald/Venkatesh:2025}.

For these reasons it is natural to study and characterise the existence of suitable generators in triangulated categories. 
Our first results build upon a notion of (noncommutative) regularity, for which we are inspired by \cite{Neeman:2021}. Then, as a first application we focus on a class of noncommutative objects called \textit{Azumaya algebras}, and make observations regarding generation for their associated derived categories of perfect complexes. 

%%%%%%%%%%%%%%%%%%%%%%%%%%%%%%%%%%%%
\subsection{Regularity}
\label{sec:intro_regularity}
%%%%%%%%%%%%%%%%%%%%%%%%%%%%%%%%%%%%

A conjecture by Bondal and Van den Bergh states that the category of perfect complexes $\operatorname{perf}(X)$ on a separated Noetherian scheme $X$ is strongly generated if, and only if, there exists an affine open cover of $X$ by rings of finite global dimension\footnote{This remains true in the non-Noetherian setting but the statement needs to be suitably adapted, see Theorem \ref{introthm:coherent_alg_perf_strong_gen}. The statement using a strong generator for the category of perfect complexes as opposed to a perfect strong $\oplus$-generators is false, see \cite{Stevenson:2024}.}. Recently, this has been proven in \cite[Theorem 0.5]{Neeman:2021}. Key ingredients in the proof of loc.\ cit.\ is the concept of approximability for the derived category of quasi-coherent sheaves on a quasi-compact separated scheme (\cite[Example 3.6]{Neeman:2021b}) combined with results giving control on the pushforward of perfect complexes along open immersions, i.e.\ \cite[Theorem 6.2]{Neeman:2021}.
We show noncommutative versions of these in respectively Propositions~\ref{prop:approx_for_dqcoh_coherent_algebra} and \ref{prop:compact_boundedness_noncommuative}. 

Another important consequence of approximability is Corollary \ref{cor:rouq_dim_via_approx_perf_alg}; this allows one to use the full machinery of arbitrary coproducts to construct strong $\oplus$-generators, and then conclude that they are strong generators. In particular, this allows one to make arguments about objects with bounded and coherent cohomology similar to statement that were previously only known for compact objects (i.e.\ \cite[Proposition 2.2.4]{Bondal/VandenBergh:2003}).

A noncommutative version of this characterization regarding the existence of strong generators for the category of perfect complexes was posed as a question in \cite[Remark 0.22.iv]{Neeman:2021}. This brings attention to our first main result, which exceeds far beyond the scope of the remark loc.\ cit.

\begin{introthm}\label{introthm:coherent_alg_perf_strong_gen}(see Theorem~\ref{thm:coherent_alg_perf_strong_gen})
    Let $X$ be a quasi-compact separated scheme and let $\mathcal{A}$ be a quasi-coherent $\mathcal{O}_X$-algebra. The following are equivalent:
    \begin{enumerate}
        \item There exists a perfect strong $\oplus$-generator for $D_{\operatorname{Qcoh}}(\mathcal{A})$.
        \item For each affine open $U \subseteq X$, $D_{\operatorname{Qcoh}}(\mathcal{A}|_U)$ has a perfect strong $\oplus$-generator.
        \item $ \mathcal{A}(U)$ has finite global dimension for each affine open $U\subseteq X$.
        \item $ \mathcal{A}(U_i)$ has finite global dimension for each open $U_i$ in an affine open cover $X=\cup^n_{i=1}U_i$. 
    \end{enumerate}
    Any of these equivalent conditions imply that $\operatorname{perf}(\mathcal{A})$ has finite Rouquier dimension.
\end{introthm}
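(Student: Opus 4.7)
The plan is to prove the equivalences by closing the cycle $(1)\Rightarrow(2)\Rightarrow(3)\Rightarrow(4)\Rightarrow(1)$, and to deduce the final Rouquier-dimension assertion from the noncommutative approximability package encoded in Corollary~\ref{cor:rouq_dim_via_approx_perf_alg}. The serious work is concentrated in $(4)\Rightarrow(1)$; the other implications should follow from general nonsense once the appropriate restriction and localization properties for $D_{\operatorname{Qcoh}}(\mathcal{A})$ are in place.

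For the easy directions, $(1)\Rightarrow(2)$ should come from the fact that restriction along an affine open immersion $U\hookrightarrow X$ induces an exact, coproduct-preserving, essentially surjective functor $D_{\operatorname{Qcoh}}(\mathcal{A})\to D_{\operatorname{Qcoh}}(\mathcal{A}|_U)$ that carries perfect complexes to perfect complexes; hence it carries a perfect strong $\oplus$-generator to one, with the same generation bound. For $(2)\Rightarrow(3)$ I would reduce to $U=\operatorname{Spec}(R)$, identify $D_{\operatorname{Qcoh}}(\mathcal{A}|_U)\simeq D(\mathcal{A}(U))$, and observe that if every object is built in $n$ cones from coproducts of shifts of a fixed perfect complex $P$, then any $\mathcal{A}(U)$-module admits a uniformly bounded projective resolution whose length is controlled by $n$ together with the Tor-amplitude of $P$, forcing finite global dimension. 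The implication $(3)\Rightarrow(4)$ is immediate.

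The main step is $(4)\Rightarrow(1)$. Given a finite affine open cover $X=\bigcup_{i=1}^{n}U_i$ with $\mathcal{A}(U_i)$ of finite global dimension, the first observation is that $\mathcal{A}|_{U_i}$ is itself a perfect strong $\oplus$-generator of $D_{\operatorname{Qcoh}}(\mathcal{A}|_{U_i})$: finite global dimension provides uniformly bounded projective resolutions on each affine piece. The plan is then to glue these local perfect strong $\oplus$-generators into a global one by invoking the noncommutative local-to-global machinery of Proposition~\ref{prop:compact_boundedness_noncommuative} together with the noncommutative approximability of $D_{\operatorname{Qcoh}}(\mathcal{A})$ supplied by Proposition~\ref{prop:approx_for_dqcoh_coherent_algebra}. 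The main obstacle I anticipate is uniformly controlling the number of cones required to assemble sections via \v{C}ech-type data; separatedness of $X$ and finiteness of the cover should be what absorbs these bounds, and approximability is what lets one pass from the many-coproducts regime back to a single perfect object. Finally, once a global perfect strong $\oplus$-generator $G$ of $D_{\operatorname{Qcoh}}(\mathcal{A})$ is in hand, Corollary~\ref{cor:rouq_dim_via_approx_perf_alg} promotes $G$ to a strong generator of the compact subcategory $\operatorname{perf}(\mathcal{A})$, yielding the finite Rouquier-dimension conclusion.
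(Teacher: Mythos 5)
Your overall architecture matches the paper's: prove the cycle $(1)\Rightarrow(2)\Rightarrow(3)\Rightarrow(4)\Rightarrow(1)$ with the real work in $(4)\Rightarrow(1)$. For the easy directions you hit the same facts the paper invokes more tersely: $(1)\Rightarrow(2)$ is the localization sequence (the paper cites Proposition~\ref{prop:NCThomasonTrobaugh}), and $(2)\Rightarrow(3)$ reduces to the affine case where the paper simply cites Rouquier's characterization of finite global dimension in terms of strong generation of the derived category. For $(4)\Rightarrow(1)$ your plan is the same in substance: on each affine piece the finite global dimension makes $\mathcal{A}|_{U_i}$ a perfect strong $\oplus$-generator, the Mayer--Vietoris triangles of Lemma~\ref{lem:mv_distinguished triangle_quasi_coh_algebra} assemble these into the statement that $D_{\operatorname{Qcoh}}(\mathcal{A}) = \overline{\langle\bigoplus_i \mathbb{R}s_{i,\ast}s_i^\ast\mathcal{A}\rangle}_N$ for some $N$, and then Proposition~\ref{prop:compact_boundedness_noncommuative} (which is where approximability is used under the hood) shows the pushforwards $\mathbb{R}s_{i,\ast}s_i^\ast\mathcal{A}$ land in $\overline{\langle G\rangle}_L$ for a compact generator $G$, giving the global perfect strong $\oplus$-generator. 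Up to here your proposal is a correct high-level reconstruction of the paper's argument.

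There is a genuine gap in the last step. You propose to deduce the finite Rouquier dimension of $\operatorname{perf}(\mathcal{A})$ from Corollary~\ref{cor:rouq_dim_via_approx_perf_alg}, but that corollary is only available under the hypothesis that $(X,\mathcal{A})$ is coherent and Noetherian; Theorem~\ref{thm:coherent_alg_perf_strong_gen} is stated for an arbitrary quasi-compact separated nc.\ scheme with a quasi-coherent algebra. The corollary lives in the world of $D^b_{\operatorname{coh}}(\mathcal{A})$ and the approximation-by-perfect-complexes result of Proposition~\ref{prop:coherenet_alg_approx_perfect}, both of which lean on coherence and Noetherianity, so it cannot be applied here. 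The paper instead passes from $D_{\operatorname{Qcoh}}(\mathcal{A}) = \overline{\langle G\rangle}_T$ (with $G$ perfect) to $\operatorname{perf}(\mathcal{A}) = \langle G\rangle_T$ by citing Neeman's \cite[Proposition 1.9]{Neeman:2021}, a general statement about compactly generated triangulated categories that requires no Noetherian or coherence assumptions. Swapping in that reference closes the gap; your corollary would only handle the Noetherian coherent case, which is the setting of Corollary~\ref{cor:alg_perf_strong_gen_weak_version}, not of the theorem as stated.
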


\begin{introcor}\label{introcor:alg_perf_strong_gen_weak_version}(see Corollary~\ref{cor:alg_perf_strong_gen_weak_version})
     Let $X$ be a quasi-compact separated scheme and let $\mathcal{A}$ be a quasi-coherent $\mathcal{O}_X$-algebra with $\mathcal{A}(U)$ a Noetherian ring for each affine open $U\subseteq X$. The following are equivalent:
     \begin{enumerate}
        \item There exists a strong generator for $\operatorname{perf}(\mathcal{A})$.
        \item For each affine open $U \subseteq X$, $\operatorname{perf}(\mathcal{A}|_U)$ has a strong generator.
        \item $\mathcal{A}(U)$ has finite global dimension for each affine open $U \subseteq X$.
        \item $ \mathcal{A}(U_i)$ has finite global dimension for each open $U_i$ in an affine open cover $X=\cup^n_{i=1}U_i$. 
    \end{enumerate}
\end{introcor}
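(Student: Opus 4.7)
The plan is to deduce this corollary from Theorem~\ref{introthm:coherent_alg_perf_strong_gen} together with an affine Noetherian lemma that allows one to replace `perfect strong $\oplus$-generator of $D_{\operatorname{Qcoh}}(\mathcal{A})$' by the weaker-looking `strong generator of $\operatorname{perf}(\mathcal{A})$'. The equivalence (3) $\Leftrightarrow$ (4) transfers verbatim from Theorem~\ref{introthm:coherent_alg_perf_strong_gen}, since these conditions only concern global dimensions of rings of sections. The implication (3) $\Rightarrow$ (1) is then immediate: by Theorem~\ref{introthm:coherent_alg_perf_strong_gen}, condition (3) produces a perfect strong $\oplus$-generator of $D_{\operatorname{Qcoh}}(\mathcal{A})$, and the closing sentence of that theorem asserts that this forces $\operatorname{perf}(\mathcal{A})$ to have finite Rouquier dimension, which is (1).

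For (1) $\Rightarrow$ (2), I will restrict a strong generator $G$ of $\operatorname{perf}(\mathcal{A})$ along the open immersion $j\colon U \hookrightarrow X$ for each affine open $U \subseteq X$. Since $X$ is quasi-compact and separated, every perfect complex on $\mathcal{A}|_U$ is a direct summand of $j^{*}E$ for some $E \in \operatorname{perf}(\mathcal{A})$; this is the noncommutative analogue of the Thomason--Neeman extension principle, in the spirit of Proposition~\ref{prop:compact_boundedness_noncommuative}. It follows that $j^{*}G$ strongly generates $\operatorname{perf}(\mathcal{A}|_U)$ in the same number of steps as $G$.

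The crux is (2) $\Rightarrow$ (3), and this is where the Noetherianity hypothesis enters essentially. The plan is to reduce to the affine statement: if $A$ is a Noetherian ring and $\operatorname{perf}(A)$ has a strong generator, then $A$ has finite global dimension. A strong generator $G$ of $\operatorname{perf}(A)$ gives $\operatorname{perf}(A) = \langle G \rangle_n$, and since $G$ is a bounded complex of finitely generated projectives one has $G \in \langle A \rangle_m$ for some $m$, so that $\operatorname{perf}(A) = \langle A \rangle_{nm}$. Equivalently, $A$ itself is a strong generator of $\operatorname{perf}(A)$ with some uniform bound $N$.

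The hard part will be converting this uniform triangulated bound into finite global dimension of $A$. Under Noetherianity, one combines the amplitude estimate from the minimal projective resolution of a finitely generated module with the structure of $D^b(\operatorname{mod}(A))$ to conclude that every finitely generated $A$-module lies in $\operatorname{perf}(A)$ and has projective dimension at most $N-1$. This is a standard affine input (noncommutative Noetherian analogue of the classical Bondal--Van den Bergh-type statement) and is precisely the step for which the hypothesis that each $\mathcal{A}(U)$ be Noetherian is used; without it, strong generation of $\operatorname{perf}$ need not classify the same algebras as perfect strong $\oplus$-generation of $D_{\operatorname{Qcoh}}$, and the equivalence with Theorem~\ref{introthm:coherent_alg_perf_strong_gen} could fail.
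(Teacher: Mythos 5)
Your proposal takes essentially the same route as the paper: (3)$\Leftrightarrow$(4) and (3)$\Rightarrow$(1) come from Theorem~\ref{thm:coherent_alg_perf_strong_gen} together with its closing claim, (1)$\Rightarrow$(2) by restricting a strong generator along open immersions, and (2)$\Rightarrow$(3) by reduction to the affine statement that, for a Noetherian ring $A$, strong generation of $\operatorname{perf}(A)$ is equivalent to $A$ having finite global dimension. Two small corrections: the extension principle you invoke for (1)$\Rightarrow$(2)---that every object of $\operatorname{perf}(\mathcal{A}|_U)$ is a direct summand of $j^{*}E$ for some $E\in\operatorname{perf}(\mathcal{A})$---is Corollary~\ref{cor:locsequenceperf} (the noncommutative Thomason--Neeman localization sequence up to summands), not Proposition~\ref{prop:compact_boundedness_noncommuative}, which is a different statement about pushforwards of perfect complexes lying in $\overline{\langle G\rangle}_N$. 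And for the affine Noetherian input in (2)$\Rightarrow$(3), the paper simply cites \cite[Proposition 10]{Krause:2024} or \cite[Corollary 4.3.13]{Letz:2020} rather than spelling out the syzygy/resolution argument you sketch; your outline of that step is a bit loose (one does not first show that every finitely generated module lies in $\operatorname{perf}(A)$ and then bound its projective dimension---rather one deduces the bound on projective dimension directly from $\operatorname{perf}(A)=\langle A\rangle_N$), but the cited references supply exactly the missing content.
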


In particular, it follows immediately that the existence of a perfect strong $\oplus$-generator as in Theorem \ref{introthm:coherent_alg_perf_strong_gen} or a strong generator as in Corollary \ref{introcor:alg_perf_strong_gen_weak_version} can be checked affine locally (see Remark~\ref{rmk:stong_gen_to_strong_oplus_for_perf}).
These results, and those in \S 4, are the cornerstone to many interesting applications.
A first of which can be found in Section~\ref{sec:intro_azumaya} below, and more appear in \cite{DeDeyn/Lank/ManaliRahul:2024c}.  
In addition, it ties together several natural notions of regularity. Consequently, it is natural to wonder whether an adequate theory of regularity can be established in our noncommutative set-up.
While we do not explore this concept here, it will be addressed in our forthcoming work \cite{DeDeyn/Lank/ManaliRahul:2024b}. 

%%%%%%%%%%%%%%%%%%%%%%%%%%%%%%%%%%%%
\subsection{Azumaya algebras}
\label{sec:intro_azumaya}
%%%%%%%%%%%%%%%%%%%%%%%%%%%%%%%%%%%%

Here we discuss a first applications of Theorem~\ref{introthm:coherent_alg_perf_strong_gen} and approximability in the context of Azumaya algebras. Recall that an \textit{Azumaya algebra} over a scheme is a coherent $\mathcal{O}_X$-algebra that is \'{e}tale locally a matrix algebra. These algebras hold significant interest across various fields \cite{Artin:1969, Manin:1971, Saltman:1978} and they are, up to a suitable equivalence, the elements of the scheme's Brauer group.

The following provides a noncommutative characterisation of regularity for a Noetherian scheme by imposing constraints on triangulated categories arising from elements of its Brauer group. The following is well-known to experts in some form, see e.g.\ \cite{Kuznetsov:2006}, however we could not locate a reference in the literature in this generality. In any case, we want to emphasise that we give a proof of this using Theorem~\ref{introthm:coherent_alg_perf_strong_gen} (and hence, approximable triangulated categories).

\begin{introcor}\label{introcor:azumaya_detects_regular}(see Corollary~\ref{cor:azumaya_detects_regular})
    If $X$ is a separated Noetherian scheme, then the following are equivalent:
    \begin{enumerate}
        \item $\operatorname{perf}(X)$ has finite Rouquier dimension, i.e.\ $X$ is regular of finite Krull dimension.
        \item $\operatorname{perf}(\mathcal{A})$ has finite Rouquier dimension for all Azumaya algebras $\mathcal{A}$ over $X$.
        \item $\operatorname{perf}(\mathcal{A})$ has finite Rouquier dimension for some Azumaya algebra $\mathcal{A}$ over $X$.
    \end{enumerate}
\end{introcor}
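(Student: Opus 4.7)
The strategy is to reduce the statement to Theorem~\ref{introthm:coherent_alg_perf_strong_gen} (and its Corollary~\ref{introcor:alg_perf_strong_gen_weak_version}) via the classical fact that an Azumaya algebra has the same global dimension as its centre. First, since $X$ is Noetherian and $\mathcal{A}$ is coherent, each $\mathcal{A}(U)$ is a finitely generated module over the Noetherian ring $\mathcal{O}_X(U)$, and hence is itself a (two-sided) Noetherian ring; so the Noetherian hypotheses of Corollary~\ref{introcor:alg_perf_strong_gen_weak_version} are satisfied throughout.

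The central ingredient I would invoke is the following well-known lemma: for an Azumaya algebra $A$ over a commutative Noetherian ring $R$, one has $\operatorname{gl.dim}(A) = \operatorname{gl.dim}(R)$. I would justify this by choosing a faithfully flat \'{e}tale cover $R \to R'$ that splits $A$, so that $A \otimes_R R' \cong M_n(R')$ for some $n$. Morita invariance yields $\operatorname{gl.dim}(M_n(R')) = \operatorname{gl.dim}(R')$, and faithfully flat descent of global dimension (originally due to Auslander--Goldman) converts this into the equality $\operatorname{gl.dim}(A) = \operatorname{gl.dim}(R)$.

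Granted these, the three implications run quickly. For $(1) \Rightarrow (2)$: if $X$ is regular of finite Krull dimension, each $\mathcal{O}_X(U)$ has finite global dimension, and the lemma passes this to $\mathcal{A}(U)$; Theorem~\ref{introthm:coherent_alg_perf_strong_gen} then supplies finite Rouquier dimension for $\operatorname{perf}(\mathcal{A})$. The implication $(2) \Rightarrow (3)$ is immediate by taking $\mathcal{A} = \mathcal{O}_X$, which is the trivial Azumaya algebra. For $(3) \Rightarrow (1)$: finite Rouquier dimension of $\operatorname{perf}(\mathcal{A})$ produces a strong generator, hence by Corollary~\ref{introcor:alg_perf_strong_gen_weak_version} each $\mathcal{A}(U)$ has finite global dimension; the lemma pushes this back to $\mathcal{O}_X(U)$, and by Serre/Auslander--Buchsbaum this means $X$ is regular and, since we need only check a finite affine cover of the quasi-compact $X$, of finite Krull dimension.

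The main obstacle I anticipate is the key lemma itself: while the equality of global dimensions of an Azumaya algebra and its centre is classical, its careful derivation relies on invoking both Morita invariance and faithfully flat (\'{e}tale) descent of global dimension, and one should verify that these interact correctly in the mixed non-commutative/commutative setting of Azumaya algebras. Once that lemma is in hand, the corollary is a direct translation under Theorem~\ref{introthm:coherent_alg_perf_strong_gen}.
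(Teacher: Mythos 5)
Your proposal is correct and follows the same overall skeleton as the paper (reduce via Corollary~\ref{introcor:alg_perf_strong_gen_weak_version} to the affine statement that an Azumaya algebra $A$ over a Noetherian ring $R$ has finite global dimension if and only if $R$ does), but you take a genuinely different route through the key lemma. You propose proving the equality $\operatorname{gl.dim}(A) = \operatorname{gl.dim}(R)$ by choosing a faithfully flat \'etale cover $R \to R'$ splitting $A$ into a matrix algebra, applying Morita invariance, and then descending. The paper instead argues one direction completely elementarily: assuming $A$ has global dimension $n$, for any $R$-module $M$ the pullback $\pi^\ast M$ has projective dimension at most $n$, its restriction $\pi_\ast\pi^\ast M$ has projective dimension at most $n$ as well (restriction is exact and restrictions of projective $A$-modules are projective $R$-modules, using that $A$ is $R$-projective), and $M$ is a direct summand of $\pi_\ast\pi^\ast M \cong M^{\oplus r}$ since $A$ is $R$-free after shrinking $X$. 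The paper then cites Auslander--Goldman (Theorem 1.8) for the converse. Your descent approach is conceptually symmetric and arguably cleaner as a single statement, but it offloads the work onto two facts that require some care: (a) the existence of one \'etale surjection splitting $A$ globally over $\operatorname{Spec} R$ (standard, though the rank may differ over distinct connected components, so $A\otimes_R R'$ is a product of matrix algebras rather than a single one, which is harmless); and (b) \'etale or faithfully flat descent of global dimension for non-commutative $R$-algebras, which needs the identification $\operatorname{Ext}^i_A(M,N)\otimes_R R' \cong \operatorname{Ext}^i_{A\otimes_R R'}(M\otimes_R R', N\otimes_R R')$ for finite modules together with an argument that \'etale base change does not \emph{increase} global dimension. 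You flag these concerns yourself and they are the right ones; neither is fatal, but the paper's pull--push argument bypasses (b) entirely and is shorter.
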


Corollary~\ref{introcor:azumaya_detects_regular} extends \cite[Lemma 10.19]{Kuznetsov:2006}, where a similar observation was made for Azumaya algebras over embeddable varieties over a field.
In particular, it extends loc.\ cit.\ to settings of mixed characteristic.
Further applications, such as versions of descent, can be found in \cite{DeDeyn/Lank/ManaliRahul:2024c}.

%%%%%%%%%%%%%%%%%%%%%%%%%%%%%%%%%%%%
\subsection{Notation}
\label{sec:intro_notation}
%%%%%%%%%%%%%%%%%%%%%%%%%%%%%%%%%%%%

Let $X$ be a scheme and $\mathcal{A}$ be a quasi-coherent $\mathcal{O}_X$-algebra. We will work with the following triangulated categories:
\begin{enumerate}
    \item $D(\mathcal A):= D(\operatorname{Mod}(\mathcal{A}))$ is the derived category of complexes of (right) $\mathcal{A}$-modules,
    \item $D_{\operatorname{Qcoh}}(\mathcal{A})$ is the (strictly full) subcategory of $D(\mathcal{A})$ of objects with quasi-coherent cohomology,
    \item $D^b_{\operatorname{coh}}(\mathcal{A})$ is the (strictly full) subcategory of $D_{\operatorname{Qcoh}}(\mathcal{A})$ consisting of objects with bounded and coherent cohomology,
    \item $\operatorname{perf}(\mathcal{A})$ is the (strictly full) subcategory of $D_{\operatorname{Qcoh}}(\mathcal{A})$ consisting of perfect complexes.
\end{enumerate}
Of course, when $\mathcal{A}=\mathcal{O}_X$ we write $D(X)$ instead of $D(\mathcal{O}_X)$ etc.

\begin{ack}
    Timothy De Deyn was supported under the ERC Consolidator Grant 101001227 (MMiMMa). Pat Lank was partly supported by National
    Science Foundation under Grant No.\ DMS-2302263. Kabeer Manali Rahul was supported under the ERC Advanced Grant 101095900-TriCatApp, is a recipient of an Australian Government Research Training Program Scholarship, and would like to thank Universit\`{a} degli Studi di Milano for their hospitality during his stay there.
\end{ack}
    
%%%%%%%%%%%%%%%%%%%%%%%%%%%%%%%%%%%%
\section{Generation \& approximation}
\label{sec:generation_approximation}
%%%%%%%%%%%%%%%%%%%%%%%%%%%%%%%%%%%%

This section is a quick and dirty exposition regarding generation and approximation in triangulated categories, and so the expert can safely skip it. Throughout let $\mathcal{T}$ be a triangulated category with shift functor $[1]\colon \mathcal{T} \to \mathcal{T}$.

%%%%%%%%%%%%%%%%%%%%%%%%%%%%%%%%%%%%
\subsection{Generation}
\label{sec:generation}
%%%%%%%%%%%%%%%%%%%%%%%%%%%%%%%%%%%%

The primary sources of references for the contents below are \cite{Bondal/VandenBergh:2003, Rouquier:2008, Avramov/Buchweitz/Iyengar/Miller:2010, Neeman:2021a, Neeman:2021, Neeman:2021b}.

\begin{definition}(\cite{Bondal/VandenBergh:2003, Neeman:2021})\label{def:thick_subcategories}
    Let $\mathcal{S}$ be a subcategory of $\mathcal{T}$.
    \begin{enumerate} 
        \item $\mathcal{S}$ is said to be \textbf{thick} if it is a triangulated subcategory which is closed under direct summands.
        \item $\langle \mathcal{S} \rangle$ denotes the smallest thick subcategory containing $\mathcal{S}$ in $\mathcal{T}$; moreover, $\langle \mathcal{S}\rangle$ will be written as $\langle G\rangle$ whenever $\mathcal{S}$ consists of a single object $G$.
        \item $\operatorname{add}(\mathcal{S})$ is the smallest strictly full subcategory of $\mathcal{T}$ containing $\mathcal{S}$ which is closed under shifts, finite coproducts, and direct summands.
        \item $\langle \mathcal{S} \rangle_0$ consists of all objects isomorphic to the zero object and $\langle \mathcal{S} \rangle_1 := \operatorname{add}(\mathcal{S})$
        \item $\langle \mathcal{S} \rangle_n := \operatorname{add} \{ \operatorname{cone}(\phi) : \phi \in \operatorname{Hom}_{\mathcal{T}} (\langle \mathcal{S} \rangle_{n-1}, \langle \mathcal{S} \rangle_1) \}$; again, this will be denoted by $\langle G\rangle_n$ whenever $\mathcal{S}$ consists of a single object $G$.
    \end{enumerate}
    Assume that $\mathcal{T}$ admits small coproducts, in addition to the above we then define
    \begin{enumerate}[resume]
        \item $\operatorname{Add}(\mathcal{S})$ is the smallest strictly full subcategory containing $\mathcal{S}$ which is closed under shifts, small coproducts, and direct summands.
        \item $\overline{\langle \mathcal{S} \rangle_0}$  consists of all objects isomorphic to the zero object and $\overline{\langle \mathcal{S} \rangle_1} := \operatorname{Add}(\mathcal{S})$
        \item $\overline{\langle \mathcal{S} \rangle_n}:= \operatorname{Add} \{ \operatorname{cone}(\phi) : \phi \in \operatorname{Hom}_{\mathcal{T}} (\overline{\langle \mathcal{S} \rangle_{n-1}}, \overline{\langle \mathcal{S} \rangle_1}) \}$;  again, this will be denoted by $\overline{\langle G\rangle}_n$ whenever $\mathcal{S}$ consists of a single object $G$.
    \end{enumerate}
\end{definition}

\begin{remark}
    It is important to note that notation slightly differs throughout the literature. For instance, we closely follow \cite{Bondal/VandenBergh:2003}, but we also allow $\operatorname{add}$ and $\operatorname{Add}$ to be closed under direct summands, which is again slightly different from \cite{Neeman:2021}. The primary `difference' between our definitions, \cite{Bondal/VandenBergh:2003} and \cite{Neeman:2021} is whether or not one allows $\operatorname{add}$ and $\operatorname{Add}$ to be closed under direct summands and/or shifts; see also \cite[Remark 1.2]{Neeman:2021}. 
\end{remark}

\begin{remark}
    In the notation of Definition~\ref{def:thick_subcategories}, there exists an exhaustive filtration for $\langle \mathcal{S} \rangle$:
    \begin{displaymath}
        \langle \mathcal{S} \rangle_0 \subseteq \langle \mathcal{S} \rangle_1 \subseteq \dots \subseteq \bigcup^{\infty}_{n=0} \langle \mathcal{S} \rangle_n = \langle \mathcal{S} \rangle.
    \end{displaymath}
\end{remark}

\begin{definition}(\cite{Rouquier:2008,Avramov/Buchweitz/Iyengar/Miller:2010})\label{def:strong_generators}
    Let $E,G$ be objects of $\mathcal{T}$, and $\mathcal{S}$ a subcategory of $\mathcal{T}$.
    \begin{enumerate}
        \item $\operatorname{level}_\mathcal{T}^{\mathcal{S}} (E)$, called the \textbf{level} of $E$ with respect to $\mathcal{S}$, is the minimal non-negative integer $n$ such that $E$ is in $\langle \mathcal{S} \rangle_n$; if $\mathcal{S}$ consists of a single object $G$, we write $\operatorname{level}_\mathcal{T}^{\mathcal{S}} (E)$ as $\operatorname{level}_\mathcal{T}^G (E)$.
        \item We say  $E$ is \textbf{finitely built} by $\mathcal{S}$ if $E$ if its level with respect to $\mathcal{S}$ is finite.
        \item $G$ is called a \textbf{classical generator} for $\mathcal{T}$ if $\langle G \rangle = \mathcal{T}$.
        \item $G$ is called a \textbf{strong generator} for $\mathcal{T}$ if $\langle G \rangle_{n+1} = \mathcal{T}$ for some $n\geq 0$, and the \textbf{generation time} of $G$ is the minimal $n$ such that for all $E$ in $ \mathcal{T}$ one has $\operatorname{level}_\mathcal{T}^G (E)\leq n+1$.
        \item $\dim \mathcal{T}$, called the \textbf{Rouquier dimension} of $\mathcal{T}$, is the smallest integer $d$ such that $\langle G \rangle_{d+1} = \mathcal{T}$ for some object $G$ in $ \mathcal{T}$, i.e.\ it is the minimal generation time of the strong generators.
    \end{enumerate}
    If, in addition, $\mathcal{T}$ has small coproducts,
    \begin{enumerate}[resume]
        \item $G$ is called a \textbf{strong $\oplus$-generator} for $\mathcal{T}$ if $\overline{\langle G \rangle}_{n+1} = \mathcal{T}$ for some $n\geq 0$.
    \end{enumerate}
\end{definition}

\begin{example}\label{ex:strong_generator_examples}
    The following classes of schemes $X$ have the property that $D^b_{\operatorname{coh}}(X)$ admits a strong or classical generator:
    \begin{enumerate}
        \item If $X$ is a quasi-affine Noetherian regular scheme of finite Krull dimension, then $\langle \mathcal{O}_X \rangle_{\dim X + 1} = \allowbreak D^b_{\operatorname{coh}}(X)$, see \cite[Corollary 5]{Olander:2023}.
        \item For any smooth quasi-projective variety $X$ over a field with very ample line bundle $\mathcal{L}$, one has $\langle \oplus^{\dim X}_{i=0} \mathcal{L}^{\otimes i} \rangle_{2 \dim X  +1 } = D^b_{\operatorname{coh}}(X)$, see \cite[Proposition 7.9]{Rouquier:2008}.
        \item Let $X$ be a separated Noetherian scheme of prime characteristic. Denote the $e$-th iterate of the Frobenius morphism of $X$ by $F^e \colon X \to X$. Assume $X$ is $F$-finite, i.e. $F\colon X \to X$ is a finite morphism. If $P$ is a classical generator for $\operatorname{perf}(X)$, then $F_\ast^e P$ is a strong generator for  $D^b_{\operatorname{coh}}(X)$ with $e \gg 0$, see \cite[Corollary 3.9]{BILMP:2023}. This is applicable to quasi-projective varieties over a perfect field of prime characteristic.
        \item Any separated quasi-excellent Noetherian scheme of finite Krull dimension admits a strong generator, see \cite[Main Theorem]{Aoki:2021}. For related work in the affine setting, see \cite{Iyengar/Takahashi:2019, Dey/Lank/Takahashi:2023}.
        \item If $X$ is a Noetherian scheme such that every closed integral subscheme of $X$ has open regular locus, then $D^b_{\operatorname{coh}}(X)$ admits a classical generator, see \cite[Theorem 1.1]{Dey/Lank:2024a}.
    \end{enumerate}
\end{example}

\begin{definition}(\cite{Neeman:2021})
    Let $\mathcal{S}$ be a subcategory of $\mathcal{T}$. Suppose $a,b$ are in $\mathbb{Z} \cup \{\pm \infty \}$. Consider the following additive subcategories of $\mathcal{T}$:
    \begin{enumerate}
        \item $\mathcal{S}(a,b) = \{ S[i] : -b < i < -a, S \in \mathcal{S} \}$.
        \item $\operatorname{coprod}_0 (\mathcal{S}(a,b))$  consists of all objects isomorphic to the zero object and $\operatorname{coprod}_1 (\mathcal{S}(a,b))$ is the smallest strictly full subcategory closed under finite coproducts of objects in $\mathcal{S}(a,b)$.
        \item $\operatorname{coprod}_n (\mathcal{S}(a,b))$ is the smallest strictly full subcategory containing objects $E$ such that there is a distinguished triangle in $\mathcal{T}$:
        \begin{displaymath}
            A \to E \to B \to A [1],
        \end{displaymath}
        where $A$ is in $\operatorname{coprod}_1 (\mathcal{S}(a,b))$ and $B$ is in $\operatorname{coprod}_{n-1} (\mathcal{S}(a,b))$.
        \item $\langle \mathcal{S} \rangle_n^{(a,b)}$ the smallest strictly full subcategory closed under direct summands which contains the category $\operatorname{coprod}_n (\mathcal{S}(a,b))$.
    \end{enumerate}
    Additionally, if $\mathcal{T}$ admits small coproducts, we also define
    \begin{enumerate}
        \item $\operatorname{Coprod}_0 (\mathcal{S}(a,b))$  consists of all objects isomorphic to the zero object and $\operatorname{Coprod}_1 (\mathcal{S}(a,b))$ is the smallest strictly full subcategory closed under small coproducts of objects in $\mathcal{S}(a,b)$.
        \item $\operatorname{Coprod}_n (\mathcal{S}(a,b))$ is the smallest strictly full subcategory containing objects $E$ such that there is a distinguished triangle in $\mathcal{T}$:
        \begin{displaymath}
            A \to E \to B \to A [1],
        \end{displaymath}
        where $A$ is in $\operatorname{Coprod}_1 (\mathcal{S}(a,b))$ and $B$ is in $\operatorname{Coprod}_{n-1} (\mathcal{S}(a,b))$.
        \item $\overline{\langle \mathcal{S} \rangle}_n^{(a,b)}$ is the smallest strictly full subcategory closed under direct summands which contains the category $\operatorname{Coprod}_n (\mathcal{S}(a,b))$.
    \end{enumerate}
    We permit various intervals such as $[a,b],[a,b),(a,b]$ that are defined analogously.
\end{definition}

\begin{definition}
    Let $\mathcal{T}$ be a triangulated category with small coproducts.
    An object $G$ in $\mathcal{T}$ is called a \textbf{compact generator} for $\mathcal{T}$ if $G$ is compact and one has that if for any object $F \in \mathcal{T}$, $\operatorname{Hom}(G,F[n]) = 0$ for all integers $n$ implies that $F = 0 $. This is equivalent to $\mathcal{T}$ being the localizing category generated by $G$. The collection of compact objects in $\mathcal{T}$ is denoted $\mathcal{T}^c$.
\end{definition}

\begin{remark}
    If $\mathcal{T}$ is compactly generated, an object is a classical generator for $\mathcal{T}^c$ if and only if it is a compact generator for $\mathcal{T}$, see e.g.\ \cite[\href{https://stacks.math.columbia.edu/tag/09SR}{Tag 09SR}]{StacksProject}.
\end{remark}

%%%%%%%%%%%%%%%%%%%%%%%%%%%%%%%%%%%%
\subsection{Approximation}
\label{sec:approximation}
%%%%%%%%%%%%%%%%%%%%%%%%%%%%%%%%%%%%

The primary sources of references for what follows below are \cite{Keller/Vossieck:1988, Beilinson/Berstein/Deligne/Gabber:2018, Neeman:2022}. Assume $\mathcal{T}$ admits all small coproducts.

\begin{definition}(\cite{Beilinson/Berstein/Deligne/Gabber:2018})\label{def:t_structure}
    A pair of strictly full subcategories $(\mathcal{T}^{\leq 0},\mathcal{T}^{\geq 0})$ in $\mathcal{T}$ is called a \textbf{$t$-structure} if the following conditions are satisfied:
    \begin{enumerate}
        \item $\operatorname{Hom}(A,B)=0$ for all $A$ in $\mathcal{T}^{\leq 0}$ and $B$ in $\mathcal{T}^{\geq 0}[-1]$,
        \item $\mathcal{T}^{\leq 0}[1]\subseteq \mathcal{T}^{\leq 0}$ and $\mathcal{T}^{\geq 0}[-1]\subseteq \mathcal{T}^{\geq 0}$,
        \item for any object $E$ in $\mathcal{T}$, there exists a distinguished triangle
        \begin{displaymath}
            A \to E \to B \to A[1]
        \end{displaymath}
        with $A$ in $\mathcal{T}^{\leq 0}$ and $B$ in $\mathcal{T}^{\geq 0}[-1]$.
    \end{enumerate}
\end{definition}

If $(\mathcal{T}^{\leq 0}, \mathcal{T}^{\geq 0})$ is a $t$-structure, then for each integer $n$ one has that the pair $(\mathcal{T}^{\leq n}, \mathcal{T}^{\geq n})$ is also a $t$-structure on $\mathcal{T}$ where $\mathcal{T}^{\leq n}:= \mathcal{T}^{\leq 0}[-n]$ and $\mathcal{T}^{\geq n}:= \mathcal{T}^{\geq 0}[-n]$. We will use the following notation
    \begin{displaymath}
        \mathcal{T}^{-}:= \bigcup_{n=0}^\infty \mathcal{T}^{\leq n},\quad \mathcal{T}^{+} := \bigcup^\infty_{n=0} \mathcal{T}^{\geq -n},\quad \mathcal{T}^b := \mathcal{T}^{-} \cap \mathcal{T}^{+}.
    \end{displaymath}

\begin{example}\label{ex:standard_t-strct_DQcoh}
    Let $X$ be a scheme. Consider the following subcategories:
    \begin{enumerate}
        \item $D^{\leq 0}_{\operatorname{Qcoh}}(X)$ consisting of objects $E$ in $D_{\operatorname{Qcoh}}(X)$ with vanishing strictly positive cohomology, i.e.\ $\mathcal{H}^j (E) = 0$ for $j >0$,
        \item $D^{\geq 0}_{\operatorname{Qcoh}}(X)$ consisting of objects $E$ in $D_{\operatorname{Qcoh}}(X)$ with vanishing strictly negative cohomology, i.e.\  $\mathcal{H}^j (E) = 0$ for $j <0$.
    \end{enumerate}
    The pair $(D^{\leq 0}_{\operatorname{Qcoh}}(X), D^{\geq 0}_{\operatorname{Qcoh}}(X))$ gives a $t$-structure on $D_{\operatorname{Qcoh}}(X)$, called the \textbf{standard $t$-structure}
    More generally, one can, of course, play the same game for more general derived categories. 
\end{example}

\begin{definition}\label{def:approx}(\cite{Neeman:2021a})
    A triangulated category $\mathcal{T}$ with small coproducts is said to be \textbf{approximable} if there exists a compact generator $G$ for $\mathcal{T}$, a $t$-structure $(\mathcal{T}^{\leq 0}, \mathcal{T}^{\geq 0})$ on $\mathcal{T}$, and a positive integer $A$ such that the following conditions hold:
    \begin{enumerate}
        \item $G[A]$ is in $\mathcal{T}^{\leq 0}$,
        \item $\operatorname{Hom}(G[-A],E)=0$ for all $E$ in $\mathcal{T}^{\leq 0}$,
        \item for each object $F$ in $\mathcal{T}^{\leq 0}$ there exists a distinguished triangle
        \begin{displaymath}
            E \to F \to D \to E[1],
        \end{displaymath}
        where $D$ is in $\mathcal{T}^{\leq -1}$ and $E$ is in $\overline{\langle G \rangle}_A^{[-A,A]}$.
    \end{enumerate}
\end{definition}

\begin{example}
    If $X$ is a quasi-compact quasi-separated scheme, then there exists a compact generator for $D_{\operatorname{Qcoh}}(X)$, see \cite[Theorem 3.1.1]{Bondal/VandenBergh:2003}. 
    Moreover, if $X$ is separated, then $D_{\operatorname{Qcoh}}(X)$ is approximable, see \cite[Example 3.6]{Neeman:2021b}.
\end{example}

\begin{definition}(\cite{Neeman:2018})\label{def:T^-_c}
    Let $(\mathcal{T}^{\leq 0},\mathcal{T}^{\geq 0})$ be a $t$-structure on $\mathcal{T}$. 
    \begin{enumerate}
        \item Define $\mathcal{T}^{-}_c$ as the collection of objects $E$ in $\mathcal{T}$ such that for each non-negative integer $n$ there exists a distinguished triangle
        \begin{displaymath}
            P \to E \to F \to P[1],
        \end{displaymath}
        where $P$ is in $\mathcal{T}^c$ and $F$ is in $\mathcal{T}^{\leq -n}$.
        \item Define $\mathcal{T}^b_c$ as the intersection of $\mathcal{T}^{-}_c$ and $\mathcal{T}^b$.
    \end{enumerate}
\end{definition}

The following statement relates the two constructions $\langle - \rangle_n$ and $\overline{\langle - \rangle}_n$ under specific conditions. It was initially noted for schemes in \cite[Remark 2.19]{Lank:2024}, drawing on \cite[Lemma 2.4 and Lemma 2.6]{Neeman:2021}, but has been significantly strengthened in \cite[Lemma 2.26]{Biswas/Chen/Rahul/Parker/Zheng:2023}.

\begin{lemma}\label{lem:big_coprod_intersection_big_thick}
    Let $\mathcal{T}$ be a triangulated category with small coproducts and a $t$-structure such that $\mathcal{T}^{\geq 0}$ is closed under small coproducts\footnote{Note that this is true for any compactly generated t-structure.}. Fix an arbitrary integer $n$ and positive integer $N$. Choose $G$ in $\mathcal{T}^b$. 
    \begin{enumerate}
        \item There exists an integer $M$ such that $\mathcal{T}^{\geq n} \cap \overline{\langle G \rangle}_N$ is contained in $\overline{ \langle G \rangle }^{[n-M,\infty)}_N$
        \item Suppose $\mathcal{S}$ is a triangulated subcategory of $\mathcal{T}$ contained in $\mathcal{T}^-_c$. Assume $G$ is in $\mathcal{S} \cap \mathcal{T}^b$. Any map $f\colon E \to F$ with $E$ an object of $\mathcal{S}$ and $F$ an object of $\overline{ \langle G \rangle}^{[M, \infty)}_n $ factors through an object $F^{\prime}$ in $\langle G \rangle^{[M,\infty)}_n$.
        \item If $G$ is an object of $\mathcal{T}^b_c$, then $\mathcal{T}^b_c \cap \overline{\langle G \rangle}_N= \langle G \rangle_N$.
    \end{enumerate}
\end{lemma}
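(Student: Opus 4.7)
The plan is to prove the three parts in sequence, with (3) following directly from (1) and (2). Fix $G \in \mathcal{T}^{[a,b]}$, possible since $G \in \mathcal{T}^b$.

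For (1) I would induct on $N$. The base case $N = 1$ is a splitting argument: any $E \in \overline{\langle G \rangle}_1 \cap \mathcal{T}^{\geq n}$ is a summand of some $P = \bigoplus_\alpha G[i_\alpha]$, and partitioning the index set by whether $i_\alpha \leq b - n$ or $i_\alpha > b - n$ gives $P = P' \oplus P''$ with $P'' \in \mathcal{T}^{\leq n-1}$ (using that $\mathcal{T}^{\leq n-1}$ is automatically closed under small coproducts, as $\tau^{\leq n-1}$ is a left adjoint). Since $E \in \mathcal{T}^{\geq n}$, the composition $E \hookrightarrow P \twoheadrightarrow P''$ vanishes, so $E$ is a summand of $P'$, yielding the claim with $M = b$. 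For the inductive step, $E$ is a summand of some $F$ fitting in a triangle $A \to F \to B \to A[1]$ with $A \in \overline{\langle G \rangle}_1$ and $B \in \overline{\langle G \rangle}_{N-1}$. Split $A = A_{\leq} \oplus A_{>}$ as in the base case, then use the octahedral axiom applied to $A_{>} \to A \to F$ to replace $F$ by the cone $F / A_{>}$, which is an extension of $B$ by $A_{\leq}$. Apply the inductive hypothesis to (a shift of) the $B$-part and track how the resulting shift bound $M_N$ depends on $M_{N-1}$, $a$, and $b$.

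For (2) I would induct on $n$, the step count. Observe first that since $G \in \mathcal{T}^{[a,b]}$ and $\mathcal{T}^{\geq 0}$ is closed under coproducts, every object of $\overline{\langle G \rangle}^{[M, \infty)}_n$ is bounded below. Using $E \in \mathcal{S} \subseteq \mathcal{T}^-_c$, choose a triangle $P \to E \to E_k \to P[1]$ with $P \in \mathcal{T}^c$ and $E_k \in \mathcal{T}^{\leq -k}$. For $k$ sufficiently large, $\operatorname{Hom}(E_k, F) = 0 = \operatorname{Hom}(E_k[-1], F)$, and hence $\operatorname{Hom}(E, F) \cong \operatorname{Hom}(P, F)$. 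It therefore suffices to factor the corresponding map $P \to F$ through some $F' \in \langle G \rangle^{[M,\infty)}_n$; applying the same isomorphism argument to $F'$ (also bounded below) then lifts the factorization back to $E$. For $n = 1$, compactness of $P$ forces any map $P \to \bigoplus G[i_\alpha]$ to factor through a finite sub-coproduct, hence through an object of $\langle G \rangle^{[M,\infty)}_1$. For the inductive step, write $F$ as a summand of an extension $A \to F^* \to B \to A[1]$; apply the inductive hypothesis to the map $P \to B$ to obtain a factorization through $B' \in \langle G \rangle^{[M, \infty)}_{n-1}$; form the triangulated fiber product $F^{**}$ of $F^*$ and $B'$ over $B$ (using the long exact sequence for $\operatorname{Hom}(P, -)$ to lift the map $P \to F^*$ to $P \to F^{**}$); and finally use compactness once more to replace the $A$-component of $F^{**}$ by a finite-coproduct analogue $A' \in \langle G \rangle^{[M, \infty)}_1$. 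The resulting $F' \in \langle G \rangle^{[M, \infty)}_n$ is the desired small object.

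Part (3) is then immediate. Given $E \in \mathcal{T}^b_c \cap \overline{\langle G \rangle}_N$, choose $n$ with $E \in \mathcal{T}^{\geq n}$, which exists since $E \in \mathcal{T}^b$. Part (1) places $E$ in $\overline{\langle G \rangle}^{[n-M, \infty)}_N$, and part (2) applied to $f = \operatorname{id}_E$ with $\mathcal{S} = \mathcal{T}^-_c$ produces a factorization $E \to E' \to E$ with $E' \in \langle G \rangle^{[n-M,\infty)}_N \subseteq \langle G \rangle_N$. Thus $E$ is a summand of $E'$, and so $E \in \langle G \rangle_N$. I expect the main difficulty to lie in the inductive step of (2): matching the refinement of the $B$-component (via induction) and the $A$-component (via compactness) into a single object of $\langle G \rangle^{[M, \infty)}_n$ requires a careful octahedral manipulation, since the triangulated fiber product is not canonical. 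The inductive step of (1) is similarly delicate, as $t$-structure truncations do not preserve $\overline{\langle G \rangle}_N$, forcing the octahedral splitting of $A$ into $A_{\leq}$ and $A_{>}$ to carry the shift bound through each step.
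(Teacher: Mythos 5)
The paper does not actually prove this lemma; it simply cites \cite[Lemma 2.26]{Biswas/Chen/Rahul/Parker/Zheng:2023} together with the observation that any object of $\overline{\langle G\rangle}^{(a,b)}_n$ is a summand of an object of $\operatorname{Coprod}_n(G(a,b))$. So your attempt at a self-contained inductive proof is a genuinely different route. Your overall architecture is exactly right: the reduction of (3) to (1) and (2) by applying (2) to $\operatorname{id}_E$ with $\mathcal{S}=\mathcal{T}^-_c$ is clean, and both base cases ($N=1$ in (1) via the split of $\bigoplus_\alpha G[i_\alpha]$ against the truncation, and $n=1$ in (2) via the $\mathcal{T}^-_c$ reduction to a compact $P$ followed by a finite sub-coproduct) are correct.

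The inductive steps, however, have real gaps. In (1), after forming $F'=F/A_>$ with the triangle $A_\leq\to F'\to B$, you cannot ``apply the inductive hypothesis to (a shift of) the $B$-part'': the hypothesis for $N-1$ requires $B\in\mathcal{T}^{\geq n'}$ for some $n'$, and nothing forces $B\in\overline{\langle G\rangle}_{N-1}$ to be bounded below. The long exact sequence of the triangle $A\to F\to B$ only yields injections $\mathcal{H}^i(B)\hookrightarrow\mathcal{H}^{i+1}(A)$ for $i<n$, which is no constraint at all since $A$ is an arbitrary small coproduct of shifts of $G$ and hence itself unbounded below. In (2), after forming the homotopy pullback $F^{**}$ with triangle $A\to F^{**}\to B'$, invoking ``compactness once more'' does not shrink $A$: there is no map $P\to A$ on which compactness of $P$ can act, and the connecting map $B'[-1]\to A$ cannot be replaced by a finite sub-coproduct by compactness, since $B'\in\langle G\rangle^{[M,\infty)}_{n-1}$ need not be compact ($G$ is only assumed to lie in $\mathcal{S}\cap\mathcal{T}^b$, not in $\mathcal{T}^c$). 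What saves (2) is that $B'[-1]\in\langle G\rangle\subseteq\mathcal{S}\subseteq\mathcal{T}^-_c$, so you may apply the already-established $n=1$ case of (2) to the map $B'[-1]\to A$ to factor it through a finite subsum $A'\hookrightarrow A$; the induced splitting $F^{**}\cong\operatorname{cone}(B'[-1]\to A')\oplus A/A'$ together with compactness of $P$ applied to the remaining $A/A'$ component then completes that step. You will need a comparably extra input to repair the inductive step of (1); as written it does not go through.
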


\begin{proof}
     This is readily deduced from \cite[Lemma 2.26]{Biswas/Chen/Rahul/Parker/Zheng:2023} and the fact that any object in $\overline{\langle \mathcal{S}\rangle}_n^{(a,b)}$ is a direct summand of an object in $\operatorname{Coprod}_n (\mathcal{S}(a, b))$. 
\end{proof}

%%%%%%%%%%%%%%%%%%%%%%%%%%%%%%%%%%%%
\section{Algebras over schemes}
\label{sec:algebras}
%%%%%%%%%%%%%%%%%%%%%%%%%%%%%%%%%%%%

This section discusses the main protagonist of our work, which are schemes equipped with noncommutative structure sheaves.

\begin{definition}\label{def:nc_scheme}
\hfill
    \begin{enumerate}
        \item A \textbf{(mild) noncommutative scheme} consists of a pair $(X,\mathcal A)$ where $X$ is a scheme and $\mathcal A$ is a quasi-coherent sheaf of $\mathcal O_X$-algebras.
        \item A \textbf{morphism of (mild) noncommutative schemes} $(f, \varphi)\colon(Y, \mathcal B)\to (X,\mathcal A)$ consists of a morphism of schemes $f\colon Y\to X$ and a morphism\footnote{This is equivalent to a morphism $f^\ast\mathcal A\to\mathcal B$ of sheaves of algebras over $f^\ast\mathcal O_{X}\cong\mathcal O_{Y}$.} $\varphi\colon f^{-1}\mathcal A\to\mathcal B$ of sheaves of $f^{-1}\mathcal O_{X}$-algebras. 
    \end{enumerate}
\end{definition}

We will omit the word `mild', suppress the $\varphi$ in the notation, and write `nc.\ scheme' for the sake of brevity in Definition~\ref{def:nc_scheme}. 

Consider a nc.\ scheme $(X,\mathcal{A})$. Unless otherwise specified, when $\mathcal{P}$ is a topological property of schemes (resp.\ morphisms of schemes) we say that $(X,\mathcal A)$ (resp.\ $(f,\varphi)$) has $\mathcal{P}$ if $X$ (resp.\ $f$) has $\mathcal{P}$. Adjectives referring to algebraic properties are more subtle.

\begin{definition}
    A nc.\ scheme $(X,\mathcal A)$ is called 
   \begin{enumerate}
        \item \textbf{affine} if $X=\operatorname{Spec}(R)$ for some commutative ring $R$,
        \item \textbf{coherent} if $\mathcal A$ is coherent as an $\mathcal{O}_X$-module,
        \item \textbf{integral} if its underlying scheme $X$ is integral,
        \item \textbf{Noetherian} if its underlying scheme $X$ is Noetherian.
   \end{enumerate} 
\end{definition}

\begin{remark}
    The nc.\ scheme $(X,\mathcal{A})$ being both coherent and Noetherian implies that $\mathcal A$ is Noetherian in the sense that it satisfies the ascending chain condition on quasi-coherent ideals (i.e.\ it is a Noetherian object in the category of quasi-coherent $\mathcal{A}$-modules) or equivalently the sections over affines are Noetherian rings.
\end{remark}

In fact, coherent Noetherian affine nc.\ schemes already appear in the (more algebraic) literature by another name.
Recall that a \textbf{Noether algebra} is a Noetherian (potentially noncommutative) ring which is finite (as module) over its center.
The coherent Noetherian affine nc.\ schemes are exactly the Noether algebras.
For this reason we adopt the following terminology.

\begin{definition}
    A noncommutative scheme $(X,\mathcal{A})$ is called a \textbf{Noether (noncommutative) scheme} if it is coherent and Noetherian.
\end{definition}

Clearly, one can view schemes as nc.\ schemes by simply equipping them with their structure sheaf, i.e.\ by considering $(X,\mathcal O_X)$. In this way we can identify the category of schemes as a full subcategory of the category of  nc.\ schemes. We do this throughout, simply writing $X$ instead of $(X,\mathcal O_X)$.

\begin{example}
    Recall that a coherent sheaf of algebras $\mathcal A$ over a scheme $X$ is called an \textbf{Azumaya algebra} if it is \'etale (or fppf) locally a matrix algebra. For more information and alternative characterisations see e.g.\ \cite{Milne:1980}. 
    % The most important examples of nc.\ schemes in this work will be given by Azumaya algebras over schemes. 
    We will say a nc.\ scheme $(X,\mathcal{A})$ is an \textbf{Azumaya scheme} if $\mathcal{A}$ is an Azumaya algebra over a Noetherian scheme $X$ (so we assume Noetherian). See \cite[Appendix D]{Kuznetsov:2006} for background on derived categories arising from Azumaya algebras over varieties.
\end{example}

\begin{example}
    A more general class of examples are \textbf{orders} over schemes, by which we mean torsion-free coherent algebras that are generically Azumaya.
    Examples are the endomorphism sheaves of reflexive sheaves over varieties, e.g.\ the noncommutative crepant resolutions of Van den Bergh \cite{VandenBergh:2004flop, VandenBergh:2004nccr} or the (maximal) modifying algebras of Iyama and Wemyss \cite{Iyama/Wemyss:2014a,Iyama/Wemyss:2014b,Wemyss:2018}.
    See also the end of the introduction of \cite{Burban/Drozd:2022} for some historical perspective concerning orders over curves or \cite{Chan/Ingalls:2005} for work on orders over surfaces.
\end{example}

Let $(X,\mathcal A)$ be a  nc.\ scheme.
One can define the category of (right) $\mathcal A$-module sheaves, and the (quasi-)coherent versions in the usual fashion (the definitions make sense over any ringed space, irrespective of whether the structure sheaf is commutative).
We will denote the respective categories by $\operatorname{Mod}(X, \mathcal A)$, $\operatorname{Qcoh}(X, \mathcal A)$ and $\operatorname{coh}(X, \mathcal A)$, or simply $\operatorname{Mod}(\mathcal A)$, $\operatorname{Qcoh}(\mathcal A )$ and $\operatorname{coh}(\mathcal A)$ when $X$ is clear from context.
It is worthwhile to note that since $\mathcal A$ is quasi-coherent, one can check quasi-coherence of an $\mathcal A$-module on the underlying $\mathcal O_X$-module level. The same is true for coherence when $\mathcal A$ is coherent.
In addition, as usual, a morphism $f\colon(Y, \mathcal B)\to (X,\mathcal A)$ induces the pullback-pushforward adjunction between module categories
\begin{displaymath}
        \begin{tikzcd}[sep=2.5em]
            \operatorname{Mod}(\mathcal{B}) \\ \operatorname{Mod}(\mathcal{A})\rlap{ .}
            \arrow[""{name=0, anchor=center, inner sep=0}, "{f_{\ast}}", bend left=45, from=1-1, to=2-1]
            \arrow[""{name=1, anchor=center, inner sep=0}, "{f^{\ast}}", bend left=45, from=2-1, to=1-1]
            \arrow["\dashv"{anchor=center}, draw=none, from=1, to=0]
        \end{tikzcd}
\end{displaymath}
With the usual hypotheses these restrict to the (quasi-)coherent categories.

\begin{example}
    Let $(X,\mathcal A)$ be a  nc.\ scheme. The sheaf $\mathcal A$ being a sheaf
    of $\mathcal O_X$-algebras comes equipped with a morphism $\mathcal O_X \to
    \mathcal A$. This in turn induces a morphism $\pi\colon (X, \mathcal A)\to
    X$ which we refer to as the \textbf{structure morphism}. The induced
    pullback-pushforward adjunction is just the usual extension and restriction
    of scalars in this case 
    \begin{displaymath}
        \begin{tikzcd}[sep=2.5em]
            {\operatorname{Mod}(\mathcal A)} \\ {\operatorname{Mod}(X)}\rlap{ .}
            \arrow[""{name=0, anchor=center, inner sep=0}, "{\pi_{\ast}=(-)|_{\mathcal O_X}}", bend left=45, from=1-1, to=2-1]
            \arrow[""{name=1, anchor=center, inner sep=0}, "{-\otimes_{\mathcal O_X}\mathcal A=\pi^{\ast}}", bend left=45, from=2-1, to=1-1]
            \arrow["\dashv"{anchor=center}, draw=none, from=1, to=0]
        \end{tikzcd}
    \end{displaymath}
    Note that $\pi_\ast$ is an exact functor (e.g.\ note that exactness does not care about the module actions).
\end{example}

\begin{example}
    Let $(X,\mathcal{A})$ be a nc.\ scheme and $f\colon Y\to X$ a morphism of schemes. 
    This induces a morphism of nc.\ schemes $(Y, f^*\mathcal{A})\to (X,\mathcal{A})$ by taking $\varphi\colon f^{-1}\mathcal A\to f^*\mathcal{A}=f^{-1}\mathcal A\otimes_{f^{-1}\mathcal O_X}\mathcal O_Y$ to be the obvious map.
    This is compatible with the structure morphisms, and so by abuse of notation we will denote the resulting morphism of nc.\ schemes again by $f$.
\end{example}

We let $D(X, \mathcal A):= D(\operatorname{Mod}(X,\mathcal{A}))$ denote the derived category of $\mathcal A$-modules. In addition, we let $D_{\operatorname{Qcoh}}(X, \mathcal{A})$ and $ D^b_{\operatorname{coh}}(X, \mathcal{A})$  denote respectively the strictly full subcategories of $D(\operatorname{Mod}(X,\mathcal{A}))$ of those objects with quasi-coherent and with bounded and coherent cohomology\footnote{It is worthwhile to note that for $(X,\mathcal A)$ quasi-compact and separated the natural functor $D(\operatorname{Qcoh}(X,\mathcal A))\to D_{\operatorname{Qcoh}}(X,\mathcal A)$ is an equivalence. Whilst for $(X,\mathcal A)$ Noetherian and coherent the natural functor $D^-(\operatorname{coh}(X,\mathcal A))\to D^-_{\operatorname{coh}}(\operatorname{Qcoh}(X,\mathcal A))$ is an equivalence. The same proofs as in the commutative case work, see e.g.\ \cite[Proposition 1.3]{Alonso/Jeremias/Lipman:1997} and \cite[\href{https://stacks.math.columbia.edu/tag/0FDA}{Tag 0FDA}]{StacksProject}.}.
Moreover, for $Z$ a closed subset of $X$, $D_{Z}(X,\mathcal{A})$, $D_{\operatorname{Qcoh},Z}(X,\mathcal{A})$ and $D^b_{\operatorname{coh},Z}(X,\mathcal{A})$ denote the strictly full triangulated subcategories consisting of those complexes supported on $Z$.
Again when clear from context we omit the $X$ from notation, simply writing $D(\mathcal A)$, $D_{\operatorname{Qcoh}}(\mathcal{A})$, etc. 
Moreover, note that, as in Example \ref{ex:standard_t-strct_DQcoh}, $D_{\operatorname{Qcoh}}(X,\mathcal{A})$ enjoys the standard $t$-structure given by $D_{\operatorname{Qcoh}}^{\leq 0} (X,\mathcal{A})$ consisting of those objects with vanishing strictly positive cohomology and $D_{\operatorname{Qcoh}}^{\geq 0} (X, \mathcal{A})$ consisting of those objects with vanishing strictly negative cohomology.

As for schemes various derived functors exist and these can be computed as usual
using $K$-injective and $K$-flat resolutions following Spaltenstein
\cite{Spaltenstein:1988}. In particular, any morphism $f$ of nc.\ schemes induces
the derived pullback-pushforward adjunction $\mathbb Lf^\ast \dashv \mathbb
Rf_\ast$. Many of the usual formulas between various derived functors for
schemes hold in the noncommutative set-up, see e.g.\ \cite[Appendix
D]{Kuznetsov:2006} or \cite[Proposition 3.12]{Burban/Drozd/Gavran:2017}. For
future reference, let us note the following lemmas.

\begin{lemma}\label{lem:proper_to_pullback_dbcoh}
    Let $f \colon (Y,\mathcal B) \to (X, \mathcal A)$ be a proper morphism of Noether  nc.\ schemes. 
    The derived pushforward restricts to give an exact functor $\mathbb{R}f_\ast \colon D^b_{\operatorname{coh}}(\mathcal{B}) \to D^b_{\operatorname{coh}}( \mathcal{A})$.
\end{lemma}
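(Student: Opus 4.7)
The plan is to reduce to the classical (commutative) fact that, for a proper morphism $f\colon Y \to X$ of Noetherian schemes, $\mathbb{R}f_\ast$ sends $D^b_{\operatorname{coh}}(Y)$ to $D^b_{\operatorname{coh}}(X)$. The reduction is performed via the structure morphisms $\pi_X\colon (X,\mathcal A)\to X$ and $\pi_Y\colon (Y,\mathcal B)\to Y$ from the example preceding the lemma. Recall that $(\pi_X)_\ast$ (resp.\ $(\pi_Y)_\ast$) is just restriction of scalars along $\mathcal O_X\to\mathcal A$ (resp.\ $\mathcal O_Y\to\mathcal B$), and in particular is exact.

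First, I would verify the natural isomorphism
\[
(\pi_X)_\ast \circ \mathbb Rf_\ast \;\cong\; \mathbb Rf_\ast \circ (\pi_Y)_\ast,
\]
where on the right $f_\ast$ denotes the (classical) scheme-level pushforward along the underlying morphism of schemes. At the underived level this is immediate because $f_\ast$ only depends on the sheaf of abelian groups and the map of topological spaces, while the $\pi_\ast$'s simply change the module action. Since both $(\pi_X)_\ast$ and $(\pi_Y)_\ast$ are exact, the identity lifts to the derived level.

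Next, I would observe that the property of being in $D^b_{\operatorname{coh}}$ is detected after applying $\pi_\ast$. Indeed, quasi-coherence and boundedness can be tested on the underlying $\mathcal O$-module (cf.\ the discussion preceding the lemma), and since $\mathcal B$ is coherent over $\mathcal O_Y$ (and similarly $\mathcal A$ over $\mathcal O_X$), a $\mathcal B$-module is coherent if and only if its restriction to $\mathcal O_Y$ is. Thus for $E\in D^b_{\operatorname{coh}}(\mathcal B)$ one has $(\pi_Y)_\ast E\in D^b_{\operatorname{coh}}(Y)$, and the classical result applied to the proper morphism $f\colon Y\to X$ of Noetherian schemes yields $\mathbb Rf_\ast(\pi_Y)_\ast E\in D^b_{\operatorname{coh}}(X)$. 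By the identification above this equals $(\pi_X)_\ast\mathbb Rf_\ast E$, whence $\mathbb Rf_\ast E\in D^b_{\operatorname{coh}}(\mathcal A)$, as required.

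I do not expect a serious obstacle here; the only delicate point is a careful check of the base-change / compatibility isomorphism $(\pi_X)_\ast\mathbb Rf_\ast\cong\mathbb Rf_\ast(\pi_Y)_\ast$. This either follows directly from the exactness of the restriction functors together with the sheaf-level equality, or can be quoted from the general framework of derived functors on module categories over ringed spaces, e.g.\ the compatibilities recorded in \cite[Appendix D]{Kuznetsov:2006}.
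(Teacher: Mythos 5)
Your proposal is correct and follows essentially the same route as the paper: both reduce to the classical coherence theorem for proper morphisms of Noetherian schemes by observing that the exact restriction-of-scalars functors $\pi_{X,\ast}$, $\pi_{Y,\ast}$ commute with $\mathbb{R}f_\ast$ and detect membership in $D^b_{\operatorname{coh}}$ (using coherence of $\mathcal A$ and $\mathcal B$). The paper records this via a $2$-commuting square and cites EGA III, while you spell out the same compatibility in prose; the content is identical.
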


\begin{proof}
    As the  nc.\ schemes are coherent, one can check coherence on the underlying $\mathcal{O}_X$- and $\mathcal{O}_Y$-level.
    Hence, the statement follows immediately from the corresponding statement for schemes, see e.g.\ \cite[Corollary 1.4.12 and Theorem 3.2.1]{EGAIII1:1961}, using the fact that the following square (2-)commutes.
	\begin{displaymath}
        \begin{tikzcd}[column sep=4em]
            {D_{\operatorname{Qcoh}}(\mathcal{B})} & {D_{\operatorname{Qcoh}}(Y)} \\
            {D_{\operatorname{Qcoh}}(\mathcal{A})} & {D_{\operatorname{Qcoh}}(X)}\rlap{ ,}
            \arrow["{\mathbb R f_\ast}"', from=1-1, to=2-1]
            \arrow["{\pi_{X,\ast}}", from=1-1, to=1-2]
            \arrow["{\pi_{Y,\ast}}", from=2-1, to=2-2]
            \arrow["{\mathbb R f_\ast}", from=1-2, to=2-2]
        \end{tikzcd}
    \end{displaymath}
    where $\pi_?$ denotes the corresponding structure morphism and we also used $f$ to denote the morphism of the underlying schemes.
\end{proof}

\begin{lemma}\label{lem:pushforward_cohomologically_bounded}
    Let $(X,\mathcal{A})$ be a quasi-compact separated nc.\ scheme and $i \colon U \to X$ be an open immersion.
    Then $\mathbb{R}i_*$ has finite cohomological dimension, i.e.\ there exists $t \geq 0$ such that $\mathbb{R}i_\ast D_{\operatorname{Qcoh}}^{\leq 0}(i^\ast \mathcal{A})$ is contained in $D_{\operatorname{Qcoh}}^{\leq t}(\mathcal{A})$.
\end{lemma}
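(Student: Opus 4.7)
The plan is to reduce to the well-known commutative analogue by means of the exact restriction-of-scalars functor associated with the structure morphism.

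Write $\pi_X\colon (X,\mathcal A)\to X$ and $\pi_U\colon (U,i^\ast\mathcal A)\to U$ for the two structure morphisms. As recorded in the example directly after Definition~\ref{def:nc_scheme}, the pushforward along a structure morphism is exact—it is just restriction of scalars along $\mathcal O\to\mathcal A$—and it does not alter the underlying complex of $\mathcal O$-modules. Consequently $\pi_{X,\ast}$ is $t$-exact for the standard $t$-structures and reflects $t$-bounds: for $E\in D_{\operatorname{Qcoh}}(\mathcal A)$ one has $E\in D^{\leq t}_{\operatorname{Qcoh}}(\mathcal A)$ if and only if $\pi_{X,\ast}E\in D^{\leq t}_{\operatorname{Qcoh}}(X)$. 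In particular, it will suffice to find a $t$ that works after applying $\pi_{X,\ast}$.

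Next I would verify the base-change identity
\[
\pi_{X,\ast}\circ\mathbb R i_\ast\;\cong\;\mathbb R i_\ast\circ\pi_{U,\ast}
\]
as functors $D_{\operatorname{Qcoh}}(i^\ast\mathcal A)\to D_{\operatorname{Qcoh}}(X)$. At the underived level the square commutes on the nose since both routes simply forget the module action and then sheaf-push along $i$. To derive, one uses that $i_\ast$ preserves $K$-injectives on either side (as $i^{-1}$ is exact), and that applying $\pi_{U,\ast}$ to a $K$-injective resolution in $\operatorname{Mod}(i^\ast\mathcal A)$ still yields an $i_\ast$-acyclic complex of $\mathcal O_U$-modules (the components stay flabby). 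This flavour of base change is standard in the noncommutative setting, cf.\ \cite[Appendix D]{Kuznetsov:2006} and \cite[Proposition 3.12]{Burban/Drozd/Gavran:2017}.

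The commutative case is classical: since $X$ is quasi-compact and separated, a finite affine cover $U=\bigcup_{j=1}^n U_j$ has affine iterated intersections, and the corresponding Čech computation produces an integer $t\geq 0$ (depending only on $X$, e.g.\ $t=n-1$) with $\mathbb R i_\ast D^{\leq 0}_{\operatorname{Qcoh}}(U)\subseteq D^{\leq t}_{\operatorname{Qcoh}}(X)$, see e.g.\ \cite[\href{https://stacks.math.columbia.edu/tag/0A33}{Tag 0A33}]{StacksProject}. Chaining everything: for $E\in D^{\leq 0}_{\operatorname{Qcoh}}(i^\ast\mathcal A)$, $\pi_{U,\ast}E$ lies in $D^{\leq 0}_{\operatorname{Qcoh}}(U)$, so $\pi_{X,\ast}\mathbb R i_\ast E\cong \mathbb R i_\ast\pi_{U,\ast}E$ lies in $D^{\leq t}_{\operatorname{Qcoh}}(X)$, and the reflection property yields $\mathbb R i_\ast E\in D^{\leq t}_{\operatorname{Qcoh}}(\mathcal A)$ with the same $t$.

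The main obstacle is the base-change step: because $\pi_\ast$ is merely exact rather than the right adjoint of an exact functor, it need not preserve $K$-injectives, so the commutation must be argued through $i_\ast$-acyclicity (or, equivalently, by modelling $\mathbb R i_\ast$ with a Čech resolution built from affine opens, which is then manifestly compatible with $\pi_\ast$). Given the structure-morphism viewpoint is already present in the paper, this is a routine—if slightly fiddly—verification, and the rest of the argument is straightforward bookkeeping around the standard $t$-structures.
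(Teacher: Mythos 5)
Your proposal is correct and follows essentially the same route as the paper: the paper's proof simply invokes the commuting square $\pi_{X,\ast}\circ\mathbb{R}i_\ast\cong\mathbb{R}i_\ast\circ\pi_{U,\ast}$ (set up earlier in Lemma~\ref{lem:proper_to_pullback_dbcoh}) together with the commutative bound from EGA, exactly as you do. Your extra discussion of the derived base-change step, and in particular the observation that the Čech model for $\mathbb{R}i_\ast$ is the cleanest way to see compatibility with $\pi_\ast$, is a faithful unpacking of what the paper leaves implicit.
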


\begin{proof}
    As in the proof of Lemma \ref{lem:proper_to_pullback_dbcoh} this follows from the corresponding statement for schemes, see e.g.\ \cite[Corollary 1.4.12]{EGAIII1:1961}.
\end{proof}

To finish the barrage of definitions let us give an explicit description of the
compact objects in the category $D_{\operatorname{Qcoh}}(\mathcal{A})$ when
$X$ is quasi-compact and quasi-separated. As in the commutative case these are
exactly the perfect complexes. 

\begin{definition}
    A complex in $D_{\operatorname{Qcoh}}(\mathcal{A})$ is called
    \textbf{perfect} if it is (Zariski) locally quasi-isomorphic to a bounded complex
    consisting of $\mathcal A$-modules which are themselves direct summands of
    finite free $\mathcal A$-modules (i.e.\ `finitely generated projective').
    Let $\operatorname{perf}(X,\mathcal A)$ denote the strictly full subcategory
    of $D_{\operatorname{Qcoh}}(\mathcal{A})$ consisting of perfect complexes
    and $\operatorname{perf}_Z(X,\mathcal A)$ the stricly full triangulated
    subcategory of $D_{\operatorname{Qcoh}}(\mathcal{A})$ consisting of those
    complexes supported on some closed subset $Z$ in $X$. If it is clear from context these will be written as $\operatorname{perf}(\mathcal{A})$ and $\operatorname{perf}_Z (\mathcal{A})$.
\end{definition}

An $\mathcal A$-module being `finitely generated projective', in the sense above, that it is locally a direct summand of a finite free $\mathcal A$-modules, is equivalent to it being finitely presented as $\mathcal A$-module and `locally projective' in the sense of \cite{Burban/Drozd/Gavran:2017}, i.e.\ its stalks are projective modules. 

Before we continue let us make the following observation.

\begin{lemma}\label{lem:structure_pushforward_reflects_0-objects}
    Let $(X,\mathcal A)$ be a  nc.\ scheme with structure map $\pi$. 
    The functor $\pi_\ast \colon D_{\operatorname{Qcoh}}(\mathcal{A}) \to D_{\operatorname{Qcoh}}(X)$ preserves and reflects ayclic complexes, in particular it reflects the zero object. 
\end{lemma}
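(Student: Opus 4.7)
The plan is to reduce the claim to the abelian level and invoke the fact that $\pi_\ast$ is exact. Recall from the preceding example that $\pi_\ast \colon \operatorname{Mod}(\mathcal A)\to\operatorname{Mod}(X)$ is nothing but restriction of scalars along $\mathcal O_X\to\mathcal A$; it forgets the $\mathcal A$-action while leaving the underlying sheaf of abelian groups (and the $\mathcal O_X$-structure) intact. In particular it is exact, and it restricts to $\operatorname{Qcoh}(\mathcal A)\to\operatorname{Qcoh}(X)$ since quasi-coherence is checked on the underlying $\mathcal O_X$-module.

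Because $\pi_\ast$ is exact on the abelian category, there is no need to derive it, and it commutes with taking cohomology of complexes: for every $E$ in $D_{\operatorname{Qcoh}}(\mathcal A)$ and every integer $i$ one has a natural isomorphism $\mathcal H^i(\pi_\ast E)\cong \pi_\ast\mathcal H^i(E)$ in $\operatorname{Qcoh}(X)$. This already gives the ``preserves acyclic complexes'' half: if $\mathcal H^i(E)=0$ for all $i$ then $\mathcal H^i(\pi_\ast E)=\pi_\ast 0 = 0$ for all $i$.

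For the reverse direction I would note the elementary fact that an object $\mathcal F$ of $\operatorname{Qcoh}(\mathcal A)$ is zero if and only if $\pi_\ast\mathcal F$ is zero in $\operatorname{Qcoh}(X)$, since $\pi_\ast\mathcal F$ has the same underlying sheaf as $\mathcal F$. Combined with the isomorphism above, $\pi_\ast E$ acyclic forces $\pi_\ast \mathcal H^i(E) = 0$ for all $i$, whence $\mathcal H^i(E)=0$ for all $i$, i.e.\ $E$ is acyclic. The last clause of the statement is then immediate: an object in $D_{\operatorname{Qcoh}}(\mathcal A)$ is isomorphic to $0$ if and only if it is acyclic, so reflection of acyclicity implies reflection of the zero object.

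The only mild subtlety — which I would not really call an obstacle — is to make sure the reader sees that no derivation of $\pi_\ast$ is required, so that the interchange with $\mathcal H^i$ is literal rather than only after passing through a spectral sequence; this is already flagged in the example introducing $\pi_\ast$.
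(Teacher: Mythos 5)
Your proof is correct and is the same argument the paper compresses into the one-line remark ``Exactness does not care about the module action'': $\pi_\ast$ is exact restriction of scalars, hence commutes with $\mathcal H^i$, and an $\mathcal A$-module vanishes iff its underlying $\mathcal O_X$-module does. You have simply spelled out the details.
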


\begin{proof}
    Exactness does not care about the module action.
\end{proof}

The following Proposition is a noncommutative version of \cite[Theorem 6.8]{Rouquier:2008}.

\begin{proposition}\label{prop:NCRouquier}
    Let $(X,\mathcal A)$ be a quasi-compact quasi-separated  nc.\ scheme.
    The perfect complexes on $(X,\mathcal A)$ are the compact objects of $D_{\operatorname{Qcoh}}(\mathcal{A})$, i.e.\ $D_{\operatorname{Qcoh}}(\mathcal{A})^c=\operatorname{perf}(\mathcal A)$. Moreover, if $Z$ is a closed subscheme of $X$ with quasi-compact complement, then $D_{\operatorname{Qcoh},Z} (\mathcal A)$ is generated by an object of $D_{\operatorname{Qcoh},Z} (\mathcal A) \cap \operatorname{perf} (\mathcal A)$.
\end{proposition}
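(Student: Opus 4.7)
The plan is to leverage the structure morphism $\pi\colon (X,\mathcal A)\to X$ to transfer the classical Bondal--Van den Bergh theorem for quasi-compact quasi-separated schemes (cited in the example following Definition~\ref{def:approx}) into the noncommutative world via the adjunction $\pi^{\ast}\dashv\pi_{\ast}$. The pivotal observations are that $\pi_{\ast}$, being restriction of scalars along $\mathcal O_X\to\mathcal A$, is exact (Lemma~\ref{lem:structure_pushforward_reflects_0-objects}) and preserves all small coproducts; consequently its left adjoint $\pi^{\ast}$ preserves compact objects. Moreover, $\pi^{\ast}$ visibly sends perfect complexes on $X$ to perfect complexes on $(X,\mathcal A)$, since tensoring a finite complex of finite free $\mathcal O_X$-modules with $\mathcal A$ yields a finite complex of finite free $\mathcal A$-modules locally.

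First I would fix a perfect compact generator $P$ of $D_{\operatorname{Qcoh}}(X)$ supplied by the commutative theorem and show that $\pi^{\ast}P$ is a perfect compact generator of $D_{\operatorname{Qcoh}}(\mathcal A)$. Perfectness and compactness were noted above; for generation, if $\operatorname{Hom}_{\mathcal A}(\pi^{\ast}P,F[n])=0$ for all $n$, then by adjunction $\operatorname{Hom}_X(P,\pi_{\ast}F[n])=0$ for all $n$, hence $\pi_{\ast}F=0$ since $P$ generates $D_{\operatorname{Qcoh}}(X)$, and then $F=0$ by Lemma~\ref{lem:structure_pushforward_reflects_0-objects}. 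The identification $D_{\operatorname{Qcoh}}(\mathcal A)^c=\operatorname{perf}(\mathcal A)$ now splits into two inclusions. The inclusion $D_{\operatorname{Qcoh}}(\mathcal A)^c\subseteq\operatorname{perf}(\mathcal A)$ is formal: $\operatorname{perf}(\mathcal A)$ is a thick subcategory containing the compact generator $\pi^{\ast}P$, hence contains $\langle\pi^{\ast}P\rangle=D_{\operatorname{Qcoh}}(\mathcal A)^c$ by the remark following the definition of compact generator. The reverse inclusion $\operatorname{perf}(\mathcal A)\subseteq D_{\operatorname{Qcoh}}(\mathcal A)^c$ is affine-local --- on an affine it is the classical fact that perfect complexes over a (not necessarily commutative) ring are compact in its derived category --- and one glues these local statements globally by a routine Mayer--Vietoris/\v{C}ech argument exploiting quasi-compactness and quasi-separatedness of $X$.

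For the support refinement, I would run the identical strategy starting from the commutative version (again Bondal--Van den Bergh/Thomason--Trobaugh): fix a perfect generator $P_Z$ of $D_{\operatorname{Qcoh},Z}(X)$ lying in $\operatorname{perf}_Z(X)$. Since support depends only on the underlying $\mathcal O_X$-module and $\pi^{\ast}=-\otimes_{\mathcal O_X}\mathcal A$, the pullback $\pi^{\ast}P_Z$ lies in $\operatorname{perf}(\mathcal A)\cap D_{\operatorname{Qcoh},Z}(\mathcal A)$; exactness of $\pi_{\ast}$ ensures the adjunction descends to the $Z$-supported subcategories, so repeating the vanishing-Hom argument shows $\pi^{\ast}P_Z$ generates $D_{\operatorname{Qcoh},Z}(\mathcal A)$. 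The only potentially technical step is the local-to-global gluing for the ``easy'' inclusion in the first part, but this is a standard Mayer--Vietoris consequence of quasi-compactness and quasi-separatedness and proceeds verbatim as in the commutative setting, with no genuine new obstacle introduced by the noncommutativity of $\mathcal A$.
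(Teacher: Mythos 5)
Your proof is correct and, for the second assertion, it coincides word-for-word with the paper's argument: take a perfect generator of $D_{\operatorname{Qcoh},Z}(X)$ from Rouquier's Theorem 6.8, pull it back along $\pi$, and use adjunction together with the fact that $\pi_\ast$ reflects acyclicity (Lemma~\ref{lem:structure_pushforward_reflects_0-objects}). For the first assertion the paper simply cites \cite[Theorem 3.14]{Burban/Drozd/Gavran:2017}, which uses Rouquier's cocovering technique, whereas you sketch a proof: one inclusion ($D_{\operatorname{Qcoh}}(\mathcal A)^c\subseteq\operatorname{perf}(\mathcal A)$) via the clean observation that $\pi_\ast$ preserves coproducts so $\mathbb L\pi^\ast$ preserves compacts, hence $\mathbb L\pi^\ast P$ is a perfect compact generator whose thick closure is all of $D_{\operatorname{Qcoh}}(\mathcal A)^c$; the other inclusion ($\operatorname{perf}(\mathcal A)\subseteq D_{\operatorname{Qcoh}}(\mathcal A)^c$) by an explicitly unelaborated Mayer--Vietoris/cocovering argument. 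This is a slightly different decomposition of the work --- your formal pullback step gives one inclusion essentially for free, localising the genuine technical content in the other --- but the local-to-global gluing you invoke is exactly Rouquier's cocovering, so the underlying machinery matches the cited reference. One small caveat: you should verify (as you implicitly assume) that $\operatorname{perf}(\mathcal A)$ is a thick subcategory, which is a routine local check, and be aware that the local-to-global step you label ``routine'' is in fact where the whole weight of the first statement lies, so a careful write-up would need to spell it out or cite Burban--Drozd--Gavran as the paper does.
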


\begin{proof}
    The first statement can be proved using Rouquier's cocovering, see e.g.\ \cite[Theorem 3.14]{Burban/Drozd/Gavran:2017} (they have the extra assumption that $X$ is separated, but it is not needed for the argument to go through in our set-up).

    To prove the second statement, let $G$ be an object of $D_{\operatorname{Qcoh},Z} (X) \cap \operatorname{perf}(X)$ that generates $D_{\operatorname{Qcoh},Z} (X)$, which exists by \cite[Theorem 6.8]{Rouquier:2008}.
    Let $\pi\colon (X,\mathcal A)\to X$ denote the structure morphism, we claim that $\mathbb L\pi^\ast G$ is the sought after object.
    Indeed, it is clearly contained in $D_{\operatorname{Qcoh},Z} (\mathcal A) \cap \operatorname{perf}(\mathcal A)$ and, moreover, suppose $E$ is an object of $D_{\operatorname{Qcoh},Z} (\mathcal{A})$ with, for all integers $n$, $\operatorname{Hom}_{\mathcal A}(\mathbb{L} \pi^\ast G,E[n])=0$. 
    By adjunction, it follows that $\operatorname{Hom}_X(G,\pi_\ast E[n])=0$ for all $n$.
    Hence $\pi_\ast E=0$ as $\operatorname{supp}(\pi_\ast E)$ is contained in $Z$, and so also $E=0$ by Lemma~\ref{lem:structure_pushforward_reflects_0-objects}. 
\end{proof}

\begin{remark}\label{rmk:pullback_cpt_gen_to_dqcoh}
    As the proof of Proposition~\ref{prop:NCRouquier} shows $\mathbb{L}\pi^\ast G$ is a compact generator for $D_{\operatorname{Qcoh}}(\mathcal{A})$ if $G$ is a compact generator for $D_{\operatorname{Qcoh}}(X)$.
\end{remark}

The following is a noncommutative version of Thomason-Trobaugh's localization sequence.

\begin{proposition}\label{prop:NCThomasonTrobaugh}
    Let $(X, \mathcal A)$ be a quasi-compact quasi-separated  nc.\ scheme. 
    If $j \colon U \to X$ is an open immersion with $U$ quasi-compact and $Z:=X\setminus U$, then there exists a Verdier localization sequence:
    \begin{displaymath}
        D_{\operatorname{Qcoh},Z}(\mathcal{A}) \to D_{\operatorname{Qcoh}}(\mathcal{A}) \xrightarrow{j^\ast} D_{\operatorname{Qcoh}}(j^\ast \mathcal{A}).
    \end{displaymath} 
\end{proposition}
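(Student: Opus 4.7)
The plan is to verify the hypotheses of the standard Bousfield/Verdier recognition criterion: a triangulated functor between triangulated categories with small coproducts that admits a fully faithful right adjoint exhibits its target as the Verdier quotient by its kernel. I will apply this to $j^\ast$ equipped with its right adjoint $\mathbb{R}j_\ast$.

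First I would identify the kernel. An object $E$ of $D_{\operatorname{Qcoh}}(\mathcal{A})$ satisfies $j^\ast E = 0$ if and only if every cohomology sheaf $\mathcal{H}^i(E)$ vanishes on $U$, i.e.\ is supported on $Z$. Hence $\ker(j^\ast) = D_{\operatorname{Qcoh},Z}(\mathcal{A})$, which is manifestly a localizing triangulated subcategory of $D_{\operatorname{Qcoh}}(\mathcal{A})$ (in fact compactly generated inside it, by Proposition~\ref{prop:NCRouquier}, although this stronger fact is not needed for the localization sequence itself).

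Next I would check that $\mathbb{R}j_\ast$ preserves quasi-coherent cohomology (so that it restricts to a functor $D_{\operatorname{Qcoh}}(j^\ast\mathcal{A})\to D_{\operatorname{Qcoh}}(\mathcal{A})$) and that the counit $j^\ast\mathbb{R}j_\ast\Rightarrow\operatorname{id}$ is an isomorphism; the latter is equivalent to $\mathbb{R}j_\ast$ being fully faithful, which is the content of the recognition criterion. Both assertions reduce to the scheme-theoretic case via the structure morphism $\pi\colon(X,\mathcal{A})\to X$: quasi-coherence of an $\mathcal{A}$-module is detected on its underlying $\mathcal{O}_X$-module, and by Lemma~\ref{lem:structure_pushforward_reflects_0-objects} the functor $\pi_\ast$ reflects zero objects. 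Combined with the base-change identity $\pi_\ast\mathbb{R}j_\ast\cong\mathbb{R}j_\ast\pi_\ast$, both claims then follow from their commutative counterparts, which are classical for quasi-compact quasi-separated schemes with quasi-compact open immersions.

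I expect the main technical obstacle to be cleanly establishing the base-change identity $\pi_\ast\mathbb{R}j_\ast\cong\mathbb{R}j_\ast\pi_\ast$ and the resulting compatibility of the counit of the $j^\ast\dashv\mathbb{R}j_\ast$ adjunction under $\pi_\ast$. These are formal consequences of the derived adjunctions outlined earlier in this section (see the discussion preceding Lemma~\ref{lem:proper_to_pullback_dbcoh}), but they require some care to set up. Once in place, the Verdier localization sequence drops out of the Bousfield recognition criterion applied to $j^\ast$.
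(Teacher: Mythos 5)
Your proposal is correct and takes essentially the same route as the paper: both recognize $j^\ast$ as a Bousfield/Verdier localization because its right adjoint $\mathbb{R}j_\ast$ is fully faithful, and then identify the kernel with $D_{\operatorname{Qcoh},Z}(\mathcal{A})$. The paper treats the fully faithfulness of $\mathbb{R}j_\ast$ as given and spends its effort on the kernel computation via stalks, whereas you spell out the reduction of fully faithfulness to the commutative case through $\pi_\ast$; this is a harmless elaboration, not a different argument.
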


\begin{proof}
    As $j^\ast$ has a fully faithful right adjoint\footnote{The adjoint preserving quasi-coherence requires the `topological' conditions in the statement.}, it follows that it is a Verdier localisation functor. It suffices to check that $\ker j^\ast = D_{\operatorname{Qcoh},Z}(\mathcal{A})$. But this is clear, as, for $E$ in $D_{\operatorname{Qcoh}}(\mathcal{A})$, we have
    \begin{displaymath}
        j^\ast E=0 \iff E_p = 0\text{ for all $p\in U$}
         \iff \operatorname{supp}(E)\subset X\setminus U=Z.\qedhere
    \end{displaymath}
\end{proof}

Combining this with Proposition \ref{prop:NCRouquier} and \cite[Theorem 2.1]{Neeman:1992} one obtains.

\begin{corollary}\label{cor:locsequenceperf}
    Let $(X, \mathcal A)$ be a quasi-compact quasi-separated  nc.\ scheme. 
    If $j \colon U \to X$ is an open immersion with $U$ quasi-compact and $Z:=X\setminus U$, then there exists a Verdier localization sequence up to summands:
    \begin{displaymath}
        \operatorname{perf}_Z (\mathcal{A}) \to \operatorname{perf} (\mathcal{A})\xrightarrow{j^\ast} \operatorname{perf} (j^\ast \mathcal{A}).
    \end{displaymath}
\end{corollary}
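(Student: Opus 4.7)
The plan is to package together the three results cited in the statement into the usual ``Thomason--Trobaugh'' style argument, now in the noncommutative setting. The starting point is Proposition~\ref{prop:NCThomasonTrobaugh}, which already hands us the Verdier localization sequence
\begin{equation*}
    D_{\operatorname{Qcoh},Z}(\mathcal{A}) \to D_{\operatorname{Qcoh}}(\mathcal{A}) \xrightarrow{j^\ast} D_{\operatorname{Qcoh}}(j^\ast \mathcal{A})
\end{equation*}
on the ``big'' categories. The strategy is then to verify the hypotheses of \cite[Theorem~2.1]{Neeman:1992}, which will allow us to descend this sequence to the subcategories of compact objects at the cost of idempotent completion on the last term.

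First I would identify the compact objects in each of the three big categories. By Proposition~\ref{prop:NCRouquier} (applied to $(X,\mathcal A)$ and to $(U, j^\ast\mathcal A)$, the latter being quasi-compact and quasi-separated since $U$ is), we have
\begin{equation*}
    D_{\operatorname{Qcoh}}(\mathcal A)^c = \operatorname{perf}(\mathcal A), \qquad D_{\operatorname{Qcoh}}(j^\ast\mathcal A)^c = \operatorname{perf}(j^\ast\mathcal A),
\end{equation*}
and moreover $D_{\operatorname{Qcoh},Z}(\mathcal A)$ is compactly generated by an object lying in $D_{\operatorname{Qcoh},Z}(\mathcal A) \cap \operatorname{perf}(\mathcal A) = \operatorname{perf}_Z(\mathcal A)$. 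A short standard check (compactness in $D_{\operatorname{Qcoh},Z}(\mathcal A)$ agrees with compactness in the ambient $D_{\operatorname{Qcoh}}(\mathcal A)$, since the inclusion preserves small coproducts and the generating object is compact in the ambient category) then yields $D_{\operatorname{Qcoh},Z}(\mathcal A)^c = \operatorname{perf}_Z(\mathcal A)$.

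With these identifications in place, the three categories in the localization sequence are compactly generated, the functor $j^\ast$ preserves small coproducts (being a left adjoint, via $j_\ast$, which exists by Lemma~\ref{lem:pushforward_cohomologically_bounded}), and $j^\ast$ sends compacts to compacts (as it clearly preserves perfect complexes). This is exactly the set-up of \cite[Theorem~2.1]{Neeman:1992}: the induced sequence on compacts
\begin{equation*}
    \operatorname{perf}_Z(\mathcal{A}) \to \operatorname{perf}(\mathcal{A}) \xrightarrow{j^\ast} \operatorname{perf}(j^\ast \mathcal{A})
\end{equation*}
is then a Verdier localization up to summands, which is precisely the desired conclusion.

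The only step I would expect to require a moment's care is the identification $D_{\operatorname{Qcoh},Z}(\mathcal A)^c = \operatorname{perf}_Z(\mathcal A)$, i.e.\ that the compact objects of the localising subcategory are really just the perfect complexes supported on $Z$; but this is routine given the existence of a compact generator in $\operatorname{perf}_Z(\mathcal A)$ provided by Proposition~\ref{prop:NCRouquier}. Everything else is a formal application of Neeman's localization theorem, with the noncommutative content already absorbed into Propositions~\ref{prop:NCThomasonTrobaugh} and \ref{prop:NCRouquier}.
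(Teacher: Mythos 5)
Your proposal is correct and is exactly the paper's approach: the paper's proof is the single sentence ``Combining this with Proposition~\ref{prop:NCRouquier} and \cite[Theorem~2.1]{Neeman:1992} one obtains,'' and you have simply unpacked what that combination involves. One very small quibble: the existence of $\mathbb{R}j_\ast$ as a right adjoint to $j^\ast$ (and hence $j^\ast$ preserving coproducts) is general nonsense and does not rest on Lemma~\ref{lem:pushforward_cohomologically_bounded}, which is about cohomological boundedness of $\mathbb{R}j_\ast$ rather than its existence; also, the identification $D_{\operatorname{Qcoh},Z}(\mathcal A)^c = \operatorname{perf}_Z(\mathcal A)$ is actually part of the conclusion of Neeman's Theorem 2.1 once you know $D_{\operatorname{Qcoh},Z}(\mathcal A)$ is generated by a set of objects compact in the ambient category (the second part of Proposition~\ref{prop:NCRouquier}), so it need not be checked separately.
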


\begin{remark}\label{rmk:ELS_verdier_nc_scheme}
    Similarly, for $(X,\mathcal{A})$ a Noether nc.\ scheme and a closed subset $Z\subseteq X$ with associated open immersion $j \colon U:=X \setminus Z \to X$. There is a Verdier localization sequence:
    \begin{displaymath}
        D^b_{\operatorname{coh},Z}(\mathcal{A}) \to D^b_{\operatorname{coh}}(\mathcal{A}) \xrightarrow{j^\ast} D^b_{\operatorname{coh}}(j^\ast \mathcal{A}).
    \end{displaymath}
    The proof of this requires more work and can be found in \cite[Theorem 4.4]{Elagin/Lunts/Schnurer:2020}.
\end{remark}

As a brief intermezzo, let us give some instances of triangulated categories arising from nc.\ schemes admitting a classical or strong generator.

\begin{example}\label{ex:NCexamples}
    \hfill
    \begin{enumerate}
        \item Let $(X,\mathcal{A})$ be an Noether nc.\ scheme. If $X$ is $J\textrm{-}2$, then $D^b_{\operatorname{coh}}(\mathcal{A})$ admits a classical generator, cf. \cite[Theorem 4.15]{Elagin/Lunts/Schnurer:2020}. See \cite{Bhaduri/Dey/Lank:2023} for related work regarding bounds on Rouquier dimension of $D^b_{\operatorname{coh}}(X)$ for $X$ its underlying scheme.
        \item Let $X$ be a Noetherian scheme. Suppose $\mathcal{I}$ is a nilpotent ideal sheaf of $\mathcal{O}_X$ with $\mathcal{I}^n=(0)$ for some $n\geq 1$. Consider the coherent $\mathcal{O}_X$-algebra given by the following matrix:
        \begin{displaymath}
            \mathcal{A}:= \begin{pmatrix}
            \mathcal{O}_X & \mathcal{I} & \mathcal{I}^2 & \cdots & \mathcal{I}^{n-1} \\
            
            \mathcal{O}_X /\mathcal{I}^{n-1} & \mathcal{O}_X /\mathcal{I}^{n-1} & \mathcal{I}/\mathcal{I}^{n-1} & \cdots & \mathcal{I}^{n-2}/\mathcal{I}^{n-1} \\

            \mathcal{O}_X /\mathcal{I}^{n-2} & \mathcal{O}_X /\mathcal{I}^{n-2} & \mathcal{O}_X/\mathcal{I}^{n-2} & \cdots & \mathcal{I}^{n-3}/\mathcal{I}^{n-2} \\
            
            \vdots & \vdots & \vdots &  &\vdots \\
            
            \mathcal{O}_X /\mathcal{I} & \mathcal{O}_X /\mathcal{I} & \mathcal{O}_X /\mathcal{I} & \cdots & \mathcal{O}_X /\mathcal{I}
            \end{pmatrix}
        \end{displaymath}
        whose algebra structure is induced by its inclusion in $\operatorname{\mathcal{E}\! \mathit{nd}}(\oplus^n_{r=1}\mathcal{O}_X /\mathcal{I}^r)$. Algebras of this form are called \textbf{Auslander algebras}, cf.\ \cite[Remark 5.1]{Kuznetsov/Lunts:2015}. Such nc.\ schemes are equivalent to finite length \textbf{filtered schemes} equipped with $\mathcal{I}$-adic filtration, \cite[Proposition 6.16 and Remark 6.17]{DeDeyn:2023}. 
        There is a semiorthogonal decomposition of $D^b_{\operatorname{coh}}(\mathcal{A})$ whose components are triangle equivalent to $D^b_{\operatorname{coh}}(X_0)$ where $X_0$ is the closed subscheme defined by $\mathcal{I}$, cf.\ \cite[Corollary 5.15]{Kuznetsov/Lunts:2015} (in loc.\ cit.\ X is assumed to be separated and of finite type over a field, but this is not needed). It follows  that $D^b_{\operatorname{coh}}(\mathcal{A})$ admits a classical or strong generator if, and only if, $D^b_{\operatorname{coh}}(X_0)$ has such objects. 
        
        \item Let $A$ be a Noetherian ring that is finite over its center $Z(A)$ and assume that the center $Z(A)$ is of finite type over a perfect field $\mathsf{k}$. When considered as a  differential graded category $D^b(\operatorname{mod}(A))$ is smooth over $\mathsf{k}$ by \cite[Theorem 5.1]{Elagin/Lunts/Schnurer:2020}. Consequently, $D^b(\operatorname{mod}(A))$ admits a strong generator, see e.g.\  \cite[Lemmas 3.5 and 3.6]{Lunts:2010}. 
        See \cite[Remark 2.6]{Elagin/Lunts/Schnurer:2020} for details.
    \end{enumerate}
\end{example}

To end this section, we make a small remark concerning the countable Rouquier dimension for nc.\ schemes. 
Recall that the \textbf{countable Rouquier dimension} of a triangulated category $\mathcal{T}$, denoted by $\operatorname{cdim}\mathcal{T}$ and introduced in \cite{Olander:2023}, is the smallest integer $d$ such that one has $\langle \mathcal{C} \rangle_{d+1} = \mathcal{T}$ for $\mathcal{C}$ a subcategory of $\mathcal{T}$ with countably many objects. 

\begin{remark}
    When $X$ is a regular Noetherian scheme of finite Krull dimension admitting an ample line bundle $\mathcal{L}$, then $\operatorname{cdim}D^b_{\operatorname{coh}}(X)$ is at most $\dim X$ by \cite[Theorem 4]{Olander:2023}. 
    In fact, the proof of loc.\ cit., when coupled with \cite[Corollary 6.5]{Burban/Drozd/Gavran:2017}, goes through for any Noether nc.\ scheme $(X,\mathcal{A})$ when $X$ admits an ample line bundle and the global dimension of $\mathcal{A}_p$ is uniformly bounded for all points $p$ in $X$.
    It is even possible to remove the ample line bundle assumptions, instead working with an ample family of line bundles (which e.g.\ exist on regular Noetherian schemes with affine diagonal).
\end{remark}

%%%%%%%%%%%%%%%%%%%%%%%%%%%%%%%%%%%%
\section{Key ingredients}
\label{sec:key_ingredients}
%%%%%%%%%%%%%%%%%%%%%%%%%%%%%%%%%%%%

This section establishes some technical results that are used to prove the main theorem. We start with the following, which yields new examples of approximable triangulated categories, and should be compared to \cite[Example 3.6]{Neeman:2021b}.

\begin{proposition}\label{prop:approx_for_dqcoh_coherent_algebra}
    Let $(X,\mathcal{A})$ be a quasi-compact separated nc.\ scheme.
    The triangulated category $D_{\operatorname{Qcoh}}(\mathcal{A})$ is approximable.
\end{proposition}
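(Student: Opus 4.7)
My plan is to transfer the approximability of $D_{\operatorname{Qcoh}}(X)$, established in \cite[Example 3.6]{Neeman:2021b}, to $D_{\operatorname{Qcoh}}(\mathcal{A})$ via the structure morphism $\pi\colon (X, \mathcal{A})\to X$. First I would pick a perfect compact generator $G$ of $D_{\operatorname{Qcoh}}(X)$ (which exists since $X$ is quasi-compact separated) and set $H:=\mathbb{L}\pi^\ast G$; by Remark~\ref{rmk:pullback_cpt_gen_to_dqcoh}, $H$ is a perfect compact generator of $D_{\operatorname{Qcoh}}(\mathcal{A})$, and since $G$ is locally a bounded complex of flat $\mathcal{O}_X$-modules, $H$ has cohomological amplitude bounded by that of $G$. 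The key observation, derived from Lemma~\ref{lem:structure_pushforward_reflects_0-objects}, is that $\pi_\ast$ is $t$-exact and conservative with respect to the standard $t$-structures, so that $E \in D_{\operatorname{Qcoh}}^{\leq n}(\mathcal{A})$ if and only if $\pi_\ast E \in D_{\operatorname{Qcoh}}^{\leq n}(X)$.

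I would then pick $A_0$ witnessing approximability of $D_{\operatorname{Qcoh}}(X)$ with respect to $G$ and choose $A \geq A_0$ large enough that $H[A] \in D_{\operatorname{Qcoh}}^{\leq 0}(\mathcal{A})$, which secures condition~(a) of Definition~\ref{def:approx}. Condition~(b) is immediate from the adjunction $\operatorname{Hom}(H[-A], E)\cong \operatorname{Hom}(G[-A], \pi_\ast E)$ combined with approximability on $X$ applied to $\pi_\ast E \in D_{\operatorname{Qcoh}}^{\leq 0}(X)$.

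The main work is condition~(c). Given $F \in D_{\operatorname{Qcoh}}^{\leq 0}(\mathcal{A})$, approximability of $X$ applied to $\pi_\ast F$ produces a triangle $E' \to \pi_\ast F \to D' \to E'[1]$ with $E' \in \overline{\langle G \rangle}_{A_0}^{[-A_0, A_0]}$ and $D' \in D_{\operatorname{Qcoh}}^{\leq -1}(X)$. I would set $E := \mathbb{L}\pi^\ast E' \in \overline{\langle H \rangle}_{A_0}^{[-A_0, A_0]}$ (since $\mathbb{L}\pi^\ast$ is a coproduct-preserving triangulated functor commuting with shifts and direct summands) and take $\tilde\alpha\colon E \to F$ to be the adjoint of the map $\alpha\colon E' \to \pi_\ast F$, which factors as $\epsilon_F \circ \mathbb{L}\pi^\ast \alpha$ where $\epsilon$ is the counit of $\mathbb{L}\pi^\ast \dashv \pi_\ast$. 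The octahedral axiom then produces a triangle $\mathbb{L}\pi^\ast D' \to \operatorname{cone}(\tilde\alpha) \to \operatorname{cone}(\epsilon_F) \to \mathbb{L}\pi^\ast D'[1]$, whose first term lies in $D_{\operatorname{Qcoh}}^{\leq -1}(\mathcal{A})$ by right $t$-exactness of $\mathbb{L}\pi^\ast$ (this follows from $t$-exactness of $\pi_\ast$ via adjunction).

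The subtle step, which I expect to be the main obstacle, is showing that $\operatorname{cone}(\epsilon_F)$ also lies in $D_{\operatorname{Qcoh}}^{\leq -1}(\mathcal{A})$, since the counit is not generally a quasi-isomorphism. The plan is to observe that both $\mathbb{L}\pi^\ast \pi_\ast F$ and $F$ lie in $D_{\operatorname{Qcoh}}^{\leq 0}(\mathcal{A})$, so the cone is automatically in $D_{\operatorname{Qcoh}}^{\leq 0}(\mathcal{A})$, and reduce to surjectivity on $\mathcal{H}^0$. By right $t$-exactness of $\mathbb{L}\pi^\ast$, we have $\mathcal{H}^0(\mathbb{L}\pi^\ast \pi_\ast F) \cong \pi^\ast \pi_\ast \mathcal{H}^0(F) \cong \mathcal{H}^0(F) \otimes_{\mathcal{O}_X}\mathcal{A}$, and under this identification $\mathcal{H}^0(\epsilon_F)$ is the action map $m \otimes a \mapsto m\cdot a$, which is surjective since $m = m\cdot 1$. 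Hence $\operatorname{cone}(\tilde\alpha) \in D_{\operatorname{Qcoh}}^{\leq -1}(\mathcal{A})$, completing the verification of approximability.
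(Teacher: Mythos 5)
Your proposal is correct and follows essentially the same strategy as the paper: transfer approximability along the structure morphism $\pi$ by pulling back the compact generator, applying approximability on $X$ to $\pi_\ast F$, pulling the resulting triangle back via $\mathbb{L}\pi^\ast$, and then using the octahedral axiom with the counit $\varepsilon_F\colon \mathbb{L}\pi^\ast\pi_\ast F\to F$, the crux being that $\operatorname{cone}(\varepsilon_F)\in D_{\operatorname{Qcoh}}^{\leq -1}(\mathcal{A})$. The only point where you diverge from the paper is in how this crux is justified: the paper represents $F$ and $\pi_\ast F$ by complexes concentrated in non-positive degrees, takes a flat resolution $F'\to\pi_\ast F$ surjective in degree zero, and factors $\varepsilon_F$ through $\pi^\ast F'\to\pi^\ast\pi_\ast F\to F$ to deduce surjectivity on $\mathcal{H}^0$; you instead argue at the level of cohomology functors, using right $t$-exactness of $\mathbb{L}\pi^\ast\pi_\ast$ to identify $\mathcal{H}^0(\mathbb{L}\pi^\ast\pi_\ast F)$ with the sheaf $\mathcal{H}^0(F)\otimes_{\mathcal{O}_X}\mathcal{A}$, under which $\mathcal{H}^0(\varepsilon_F)$ becomes the (split surjective) action map. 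Both routes are correct; yours is somewhat more conceptual and avoids choosing resolutions, while the paper's is more hands-on.
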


\begin{proof}
    Let $\pi \colon (X,\mathcal A)\to X$ denote the structure morphism.
    The triangulated category $D_{\operatorname{Qcoh}}(X)$ is approximable, cf. \cite[Example 3.6]{Neeman:2021b}.
    Thus let $G_X$ be a compact generator for $D_{\operatorname{Qcoh}}(X)$ satisfying the definition of approximability, and let $G:=\mathbb{L}\pi^\ast G_X$ be the corresponding compact generator in $D_{\operatorname{Qcoh}}(\mathcal{A})$, cf.\ Remark \ref{rmk:pullback_cpt_gen_to_dqcoh}.
    As $\mathbb{L}\pi^\ast D_{\operatorname{Qcoh}}^{\leq 0}(X)$ is contained in $D_{\operatorname{Qcoh}}^{\leq 0}(\mathcal A)$ and $\pi_\ast D_{\operatorname{Qcoh}}^{\leq 0}(\mathcal A)$ is contained in $D_{\operatorname{Qcoh}}^{\leq 0}(X)$, $G$ satisfies the first two conditions in the definition of approximability (although, of course, this is true for any perfect complex), see Definition \ref{def:approx}.
    
    For the third, let $F$ be an object of $D_{\operatorname{Qcoh}}^{\leq 0}(\mathcal{A})$. Approximability of $D_{\operatorname{Qcoh}}(X)$ gives a distinguished triangle
    \begin{displaymath}
        E \to \pi_\ast F \to D \to E[1]\quad\text{(in $D_{\operatorname{Qcoh}}(X)$)},
    \end{displaymath}
    where $E$ belongs to $\overline{\langle G_X \rangle}^{[-A,A]}_A$ and $D$ is in $ D_{\operatorname{Qcoh}}^{\leq -1}(X)$.
    Applying $\mathbb{L}\pi^\ast$ yields the distinguished triangle
    \begin{displaymath}
        \mathbb{L}\pi^\ast E \to \mathbb{L}\pi^\ast\pi_\ast F \to \mathbb{L}\pi^\ast D \to \mathbb{L}\pi^\ast E[1]\quad\text{(in $D_{\operatorname{Qcoh}}(\mathcal{A})$)},
    \end{displaymath}
    where $\mathbb{L}\pi^\ast E$ is in $\overline{\langle G \rangle}^{[-A,A]}_A$ and $\mathbb{L}\pi^\ast D$ is in $D_{\operatorname{Qcoh}}^{\leq -1}(\mathcal{A})$. Consider next the counit of adjunction $\varepsilon_F \colon \mathbb{L}\pi^\ast\pi_\ast F \to F$. As $F$ is an object of $D_{\operatorname{Qcoh}}^{\leq 0}(\mathcal{A})$, it can be represented by a complex with all positive terms vanishing, and we can do so as well with $\pi_\ast F$. Choose a flat resolution $F' \to \pi_\ast F$, which is surjective at the zeroth degree as a map of complexes. There exists a quasi-isomorphism $\mathbb{L}\pi^\ast \pi_\ast F \cong \pi^\ast F'$ in $D_{\operatorname{Qcoh}}(\mathcal{A})$ and $\varepsilon_F$ factors as $\pi^\ast F' \to \pi^\ast \pi_\ast F \to F$. Both the maps are surjective at the zeroth degree --- the first as tensor product is right exact, and the second as the $\pi_\ast$ is faithful at the level of abelian categories. It follows that $\varepsilon_F $ is surjective at the zeroth degree as a map of complexes, and hence, on the zeroth cohomology as both of these complexes are concentrated in non-positive degrees. Thus $\operatorname{cone}(\varepsilon_F)$ belongs to $D_{\operatorname{Qcoh}}^{\leq -1}(\mathcal{A})$. Therefore, if we apply the octahedral axiom to $\mathbb{L}\pi^\ast E \to \mathbb{L}\pi^\ast\pi_\ast F \xrightarrow{\varepsilon_F} F$, we get the required approximating distinguished triangle, which completes the proof.
\end{proof}

The next proposition shows that for $\mathcal{T}:=D_{\operatorname{Qcoh}}(\mathcal{A})$ with $(X,\mathcal{A})$ a Noether nc.\ scheme one has $\mathcal{T}^-_c=D^-_{\operatorname{coh}}(\mathcal{A})$ (notation from Definition~\ref{def:T^-_c}).
Here, of course, $D^{-}_{\operatorname{coh}}(\mathcal{A})$ denotes those objects $E$ in $D_{\operatorname{Qcoh}}(\mathcal{A})$ such that $\mathcal{H}^j (E)=0$ for $j \gg 0$ and $\mathcal{H}^n (E)$ is coherent for all integers $n$. 
It is a noncommutative version of what is known as `approximation by perfect complexes' on a scheme, cf.\ \cite[Theorem 4.1]{Lipman/Neeman:2007}.

\begin{proposition}\label{prop:coherenet_alg_approx_perfect}
    Let $(X,\mathcal{A})$ be a Noether nc.\ scheme. 
    For any integer $n$ and any object F in $D^{-}_{\operatorname{coh}}(X)$, there exists a triangle $E \to F \to D \to E[1]$ with $E$ in $\operatorname{perf}(\mathcal{A})$ and $D$ in $D_{\operatorname{Qcoh}}(\mathcal{A})^{\leq -n}$.
\end{proposition}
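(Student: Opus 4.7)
The plan is to induct on $n$, reducing the noncommutative statement at each stage to its commutative counterpart on the underlying scheme $X$ via the adjunction $\mathbb L\pi^\ast \dashv \pi_\ast$ of the structure morphism. A preliminary cohomological shift lets one assume $n \geq 1$ and $F \in D^{\leq 0}_{\operatorname{Qcoh}}(\mathcal A) \cap D^-_{\operatorname{coh}}(\mathcal A)$.

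For the base case $n = 1$, I would apply the commutative approximation-by-perfect-complexes theorem (\cite[Theorem 4.1]{Lipman/Neeman:2007}) to $\pi_\ast F \in D^-_{\operatorname{coh}}(X)$, using that $\pi_\ast$ is exact (Lemma~\ref{lem:structure_pushforward_reflects_0-objects}) and preserves coherence since $\mathcal A$ is coherent. This produces $E' \in \operatorname{perf}(X)$ with $E' \to \pi_\ast F$ whose cone lies in $D^{\leq -1}_{\operatorname{Qcoh}}(X)$. Pulling back via $\mathbb L\pi^\ast$ — which is right $t$-exact and preserves perfection — gives $\mathbb L\pi^\ast E' \in \operatorname{perf}(\mathcal A)$ together with a map $\mathbb L\pi^\ast E' \to \mathbb L\pi^\ast \pi_\ast F$ whose cone is in $D^{\leq -1}_{\operatorname{Qcoh}}(\mathcal A)$. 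Composing with the counit $\varepsilon_F\colon \mathbb L\pi^\ast \pi_\ast F \to F$, whose cone was shown to lie in $D^{\leq -1}_{\operatorname{Qcoh}}(\mathcal A)$ inside the proof of Proposition~\ref{prop:approx_for_dqcoh_coherent_algebra}, and then invoking the octahedral axiom, produces the desired approximation.

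For the inductive step, suppose $E_k \in \operatorname{perf}(\mathcal A)$ sits in a triangle $E_k \to F \xrightarrow{b} D_k \to E_k[1]$ with $D_k \in D^{\leq -k}_{\operatorname{Qcoh}}(\mathcal A)$. The long exact sequence forces $D_k \in D^-_{\operatorname{coh}}(\mathcal A)$, so applying the base case to $D_k[-k] \in D^{\leq 0}_{\operatorname{Qcoh}}(\mathcal A) \cap D^-_{\operatorname{coh}}(\mathcal A)$ produces $P \in \operatorname{perf}(\mathcal A)$ and a triangle $P[k] \to D_k \xrightarrow{e} D_{k+1} \to P[k+1]$ with $D_{k+1} \in D^{\leq -k-1}_{\operatorname{Qcoh}}(\mathcal A)$. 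The octahedral axiom applied to the composition $F \xrightarrow{b} D_k \xrightarrow{e} D_{k+1}$ then yields a triangle $E_{k+1} \to F \to D_{k+1} \to E_{k+1}[1]$, and its third face is a triangle $E_k \to E_{k+1} \to P[k] \to E_k[1]$, exhibiting $E_{k+1}$ as an extension of two perfect complexes — hence itself perfect. The main subtle point to control is that the counit $\varepsilon_F$ has genuinely sharp $t$-amplitude (cone in $D^{\leq -1}$, not some uncontrolled degree), which is what allows the bound to propagate through the induction; fortunately this is established inside the proof of Proposition~\ref{prop:approx_for_dqcoh_coherent_algebra}, so everything slots together cleanly.
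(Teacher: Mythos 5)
Your proof is correct and takes essentially the same approach as the paper: reduce mod $n$ by induction, handle $n=1$ by approximating $\pi_\ast F$ on the underlying scheme, pulling back along $\mathbb{L}\pi^\ast$, and exploiting that the counit $\varepsilon_F$ has cone in $D_{\operatorname{Qcoh}}^{\leq -1}(\mathcal A)$, then propagate via the octahedral axiom. You also make explicit a detail the paper glosses over when it says the $n=1$ case is ``exactly the same argument'' as Proposition~\ref{prop:approx_for_dqcoh_coherent_algebra}: there one must substitute Lipman--Neeman's approximation by perfect complexes for Neeman's approximability, since the latter only yields an approximant in $\overline{\langle G_X\rangle}^{[-A,A]}_A$ rather than in $\operatorname{perf}(X)$.
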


\begin{proof}
    Let $F$ be any object in $D^{-}_{\operatorname{coh}}(\mathcal{A})$. Then, by shifting if necessary, we can assume that $F$ lies in $D_{\operatorname{Qcoh}}(\mathcal{A})^{\leq 0} \cap D^{-}_{\operatorname{coh}}(X)$. We will now construct the required triangles inductively on $n \geq 1$. 
    
    The case $n=1$ is proved by exactly the same argument as in Proposition \ref{prop:approx_for_dqcoh_coherent_algebra}. 
    Thus it just remains to prove the inductive step. So, suppose we know the result up to some positive integer $n$. That is, there exists a triangle $E \to F \to D_n \to E[1]$ with $E$ in $\operatorname{perf}(\mathcal{A})$ and $D_n$ in $D_{\operatorname{Qcoh}}(\mathcal{A})^{\leq -n}$. As $E$ lies in $\operatorname{perf}(\mathcal{A}) \subseteq D^{-}_{\operatorname{coh}}(\mathcal{A})$, we have that $D_n$ lies in $D_{\operatorname{Qcoh}}(\mathcal{A})^{\leq -n} \cap D^{-}_{\operatorname{coh}}(\mathcal{A})$. So we can apply the $n=1$ case to $D_n[-n]$ and obtain a triangle $E' \to D_n \to D_{n+1} \to E'[1]$ with $E'$ in $\operatorname{perf}(\mathcal{A})$ and $D_{n+1}$ in $D_{\operatorname{Qcoh}}(\mathcal{A})^{\leq -n-1}$. Applying the octahedral axiom to the composable morphisms $F \to D_n \to D_{n+1}$ yields the result.  
\end{proof}

\begin{remark}
    One can modify the above proof to work for any quasi-compact quasi-separated scheme $X$ as long as $\mathcal{A}$ is pseudo-coherent as a $\mathcal{O}_X$-module, granted one replaces $D^-_{\operatorname{coh}}(\mathcal{A})$ by the pseudo-coherent $\mathcal{A}$-complexes.
\end{remark}

\begin{corollary}\label{cor:rouq_dim_via_approx_perf_alg}
    Let $(X,\mathcal{A})$ be an Noether nc.\ scheme. 
    Suppose $G$ is an object of $D^b_{\operatorname{coh}}(\mathcal{A})$ and $n\geq 0$.
    Then
    \begin{enumerate}
        \item $\operatorname{perf}(\mathcal{A})\subseteq\langle G \rangle_n$ if, and only if, $D^b_{\operatorname{coh}}(\mathcal{A}) = \langle G  \rangle_n$,
        \item $\overline{\langle G \rangle}_n \cap D^b_{\operatorname{coh}}(\mathcal{A}) = \langle G \rangle_n$.
    \end{enumerate}
\end{corollary}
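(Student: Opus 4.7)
The plan is to identify $\mathcal{T}^b_c$ (for $\mathcal{T} := D_{\operatorname{Qcoh}}(\mathcal{A})$ equipped with its standard $t$-structure, see Definition~\ref{def:T^-_c}) with $D^b_{\operatorname{coh}}(\mathcal{A})$, and then use Lemma~\ref{lem:big_coprod_intersection_big_thick}(3) to deduce (2); (1) will then follow from (2) together with the perfect approximation of Proposition~\ref{prop:coherenet_alg_approx_perfect}. To identify $\mathcal{T}^-_c$, observe that Proposition~\ref{prop:coherenet_alg_approx_perfect} is essentially the inclusion $D^-_{\operatorname{coh}}(\mathcal{A}) \subseteq \mathcal{T}^-_c$. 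For the reverse, given $E \in \mathcal{T}^-_c$ and an integer $k$, I will use the defining triangle $P \to E \to F \to P[1]$ with $P \in \operatorname{perf}(\mathcal{A})$ and $F \in \mathcal{T}^{\leq -N}$ for $N$ large enough relative to $k$; the long exact sequence then yields $\mathcal{H}^k(E) \cong \mathcal{H}^k(P)$, which is coherent. Similar reasoning (taking $k$ larger than the cohomological support of an approximating $P$ at $n = 0$) shows $E \in \mathcal{T}^-$, so $\mathcal{T}^-_c \subseteq D^-_{\operatorname{coh}}(\mathcal{A})$; intersecting with $\mathcal{T}^b$ yields $\mathcal{T}^b_c = D^b_{\operatorname{coh}}(\mathcal{A})$.

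With this identification, part (2) will follow directly from Lemma~\ref{lem:big_coprod_intersection_big_thick}(3) applied to $G \in D^b_{\operatorname{coh}}(\mathcal{A}) = \mathcal{T}^b_c$; the reverse inclusion is clear. The backward direction of (1) is immediate since $\operatorname{perf}(\mathcal{A}) \subseteq D^b_{\operatorname{coh}}(\mathcal{A})$. For the forward direction, assuming $\operatorname{perf}(\mathcal{A}) \subseteq \langle G\rangle_n$, I will take $F \in D^b_{\operatorname{coh}}(\mathcal{A})$; by (2) it will suffice to show $F \in \overline{\langle G\rangle}_n$. For this, Proposition~\ref{prop:coherenet_alg_approx_perfect} yields, for each $m$, a perfect approximation $E_m \to F$ with cone in $D^{\leq -m}_{\operatorname{Qcoh}}(\mathcal{A})$, and each such $E_m$ already lies in $\langle G\rangle_n \subseteq \overline{\langle G\rangle}_n$ by hypothesis. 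The plan is to organise these into a compatible Milnor tower whose colimit is $F$ (well-defined since $F$ is bounded and the cones are arbitrarily negative), and then argue that the colimit stays in $\overline{\langle G\rangle}_n$ using the $\operatorname{Add}$-closure of this class.

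The hard part will be ensuring that the Milnor colimit remains at level $n$ rather than inflating to level $2n$: a naive cone between two objects at level $n$ typically only lands at level $2n$. Leveraging the approximable structure of $\mathcal{T}$ (Proposition~\ref{prop:approx_for_dqcoh_coherent_algebra}), the tower must be chosen so that the transition cofibers sit in a controlled slice of $\overline{\langle G\rangle}_n$, permitting the colimit to be realised as a summand of an object plainly in $\overline{\langle G\rangle}_n$. Once $F \in \overline{\langle G\rangle}_n$ is secured, part (2) then gives $F \in \langle G\rangle_n$, completing the argument.
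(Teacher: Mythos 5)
Your reduction is sound up to the final step, and you have correctly reconstructed the role of $\mathcal{T}^-_c$: the identification $\mathcal{T}^b_c = D^b_{\operatorname{coh}}(\mathcal{A})$ (for $\mathcal{T}=D_{\operatorname{Qcoh}}(\mathcal{A})$ with its standard $t$-structure) follows from Proposition~\ref{prop:coherenet_alg_approx_perfect} in one direction and the long-exact-sequence argument you sketch in the other, using that perfect complexes on a coherent Noetherian nc.\ scheme have bounded coherent cohomology. Part (2) then does follow from Lemma~\ref{lem:big_coprod_intersection_big_thick}(3) exactly as you say, and the backward implication in (1) is immediate. This much is essentially the paper's own route (the paper views the same facts as instances of \cite[Proposition 3.5]{Lank/Olander:2024}).

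The gap is in the forward implication of (1), where you reduce to showing $F\in\overline{\langle G\rangle}_n$ for $F\in D^b_{\operatorname{coh}}(\mathcal{A})$ and then propose a Milnor tower of perfect approximations $E_m\to F$. You correctly identify the obstruction: the homotopy colimit is a cone over coproducts of objects of $\overline{\langle G\rangle}_n$ and hence a priori lands only in $\overline{\langle G\rangle}_{2n}$. The proposed remedy --- choosing the tower ``so that the transition cofibers sit in a controlled slice of $\overline{\langle G\rangle}_n$'' so that the colimit becomes a summand of something visibly in $\overline{\langle G\rangle}_n$ --- is not carried out, and it is not clear how it would be. In particular, Proposition~\ref{prop:approx_for_dqcoh_coherent_algebra} gives approximations by objects built out of a specific compact generator, not out of the given $G$, so invoking ``the approximable structure'' here does not by itself control the $G$-level of the transition cofibers $E_{m+1}/E_m$; these are merely perfect, hence in $\langle G\rangle_n$, not in $\langle G\rangle_1$. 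The paper avoids this entirely by delegating to \cite[Proposition 3.5]{Lank/Olander:2024}, which packages a more careful argument than a naive Milnor colimit; so what you have is a correct skeleton that reduces everything to a single nontrivial claim, but that claim is precisely the content you would need to supply, and the sketch as written does not supply it.
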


\begin{proof}
    Both claims follow from \cite[Proposition 3.5]{Lank/Olander:2024}, noting that the `approximation property' is satisfied for $\operatorname{perf}(\mathcal{A})$ by Proposition~\ref{prop:coherenet_alg_approx_perfect}, and by viewing  $D_{\operatorname{Qcoh}}(\mathcal{A})$ with its standard $t$-structure. (Also using Lemma~\ref{lem:big_coprod_intersection_big_thick} for the second claim.)
\end{proof}

\begin{lemma}\label{lem:mv_distinguished triangle_quasi_coh_algebra}
    Let $(X,\mathcal{A})$ be a quasi-compact quasi-separated nc.\ scheme. Suppose $U$ and $V$ are quasi-compact open subsets of $X$ with associated open immersions $i \colon U \to X$, $j \colon V \to X$, and $k \colon U \cap V \to X$. 
    For any object $F$ of $D_{\operatorname{Qcoh}}(\mathcal{A})$ there exists a Mayer-Vietoris distinguished triangle in $D_{\operatorname{Qcoh}}(\mathcal{A})$: 
    \begin{displaymath}
        F \to \mathbb{R}j_\ast j^\ast F\oplus \mathbb{R}i_\ast i^\ast F \to \mathbb{R}k_\ast k^\ast F \to F[1].
    \end{displaymath}
\end{lemma}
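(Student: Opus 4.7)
The plan is to construct the two structural arrows of the candidate triangle explicitly and then verify its distinguishedness by pushing forward along the structure morphism $\pi\colon(X,\mathcal A)\to X$, reducing to the classical Mayer-Vietoris triangle on schemes. For the first arrow $F\to\mathbb{R}j_\ast j^\ast F\oplus\mathbb{R}i_\ast i^\ast F$, I would take the pair of units of the adjunctions $j^\ast\dashv\mathbb{R}j_\ast$ and $i^\ast\dashv\mathbb{R}i_\ast$. For the second, factor $k=i\circ j'=j\circ i'$ through the induced open immersions $j'\colon U\cap V\to U$ and $i'\colon U\cap V\to V$, yielding natural identifications $\mathbb{R}k_\ast k^\ast\simeq\mathbb{R}i_\ast\mathbb{R}j'_\ast j'^\ast i^\ast\simeq\mathbb{R}j_\ast\mathbb{R}i'_\ast i'^\ast j^\ast$; the second arrow is then the difference of the two natural maps $\mathbb{R}i_\ast i^\ast F\to\mathbb{R}k_\ast k^\ast F$ and $\mathbb{R}j_\ast j^\ast F\to\mathbb{R}k_\ast k^\ast F$ coming from the units for $j'^\ast\dashv\mathbb{R}j'_\ast$ and $i'^\ast\dashv\mathbb{R}i'_\ast$.

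Next, complete the first arrow to a distinguished triangle $F\to\mathbb{R}j_\ast j^\ast F\oplus\mathbb{R}i_\ast i^\ast F\to C\to F[1]$ in $D_{\operatorname{Qcoh}}(\mathcal A)$. A short chase with the triangle identities shows the composite of the two constructed arrows vanishes, yielding a canonical comparison map $\varphi\colon C\to\mathbb{R}k_\ast k^\ast F$; it remains only to show that $\varphi$ is an isomorphism. Apply $\pi_\ast$. Since $\pi_\ast$ is simply restriction of scalars at the level of underlying abelian sheaves, it is exact and commutes with both $\mathbb{R}(-)_\ast$ and $(-)^\ast$ along open immersions. Hence the $\pi_\ast$-image of the entire candidate diagram is identified with the classical Mayer-Vietoris diagram for $\pi_\ast F\in D_{\operatorname{Qcoh}}(X)$, which is distinguished by the standard result for schemes (under the implicit standing hypothesis $X=U\cup V$, without which the triangle fails already for $U=V=\emptyset$). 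Therefore $\pi_\ast\varphi$ is an isomorphism, and since $\pi_\ast$ reflects the zero object by Lemma~\ref{lem:structure_pushforward_reflects_0-objects}, the cone of $\varphi$ vanishes and $\varphi$ itself is an isomorphism.

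There is no serious obstacle here; the argument is essentially bookkeeping. The substantive input is the classical Mayer-Vietoris triangle on schemes, and everything else follows from the exactness and conservativity of $\pi_\ast$ together with its immediate compatibility with pullback and pushforward along open immersions.
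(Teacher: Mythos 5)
Your route is genuinely different from the paper's: the paper simply observes that the proof of \cite[Tag 08BV]{StacksProject} (the short exact sequence of K-injective complexes $0\to\mathcal I^\bullet\to j_\ast\mathcal I^\bullet|_V\oplus i_\ast\mathcal I^\bullet|_U\to k_\ast\mathcal I^\bullet|_{U\cap V}\to 0$ coming from the sheaf axiom) works verbatim for the ringed space $(X,\mathcal A)$, and then notes that the qcqs hypotheses ensure the resulting triangle lies in $D_{\operatorname{Qcoh}}(\mathcal A)$. You instead propose to compare an abstract cone $C$ of the first arrow with $\mathbb Rk_\ast k^\ast F$ and check the comparison is an isomorphism after applying the conservative exact functor $\pi_\ast$. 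Your observation that the statement implicitly requires $X=U\cup V$ is correct, and the bookkeeping about $\pi_\ast$ commuting with pullback and pushforward along open immersions is fine.

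The gap is in the step ``therefore $\pi_\ast\varphi$ is an isomorphism.'' What you have, after applying $\pi_\ast$, are two distinguished triangles over the same morphism $\pi_\ast u\colon\pi_\ast F\to\pi_\ast A$ with third vertices $\pi_\ast C$ and $\mathbb Rk_\ast k^\ast\pi_\ast F$, together with a map $\pi_\ast\varphi$ satisfying only the compatibility $\pi_\ast\varphi\circ\pi_\ast v=\pi_\ast v'$ with the \emph{second} arrows. That is not enough: a map between two cones of the same morphism which is compatible with the maps out of the middle term, but for which compatibility with the connecting maps into $\pi_\ast F[1]$ has not been verified, need not be an isomorphism. (Concretely, if $u=0$ and the cones are $B\oplus A[1]$, the map $\operatorname{diag}(1,0)$ is compatible with the inclusion $B\hookrightarrow B\oplus A[1]$ yet is not invertible.) More to the point, any $\varphi$ with $\varphi v=v'$ can be perturbed to $\varphi+\beta w$ for arbitrary $\beta\colon F[1]\to\mathbb Rk_\ast k^\ast F$ without changing $\varphi v$, and not all such perturbations are isomorphisms; so ``the'' comparison map is not canonical, and your argument as written does not single out an invertible one. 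To repair this you would have to either track compatibility with the connecting maps (e.g.\ construct $\varphi$ at the level of complexes of injectives, at which point you are essentially redoing Tag 08BV for $\mathcal A$-modules, which is the paper's proof), or show $\varphi$ is an isomorphism by restricting along $i^\ast$ and $j^\ast$ and invoking flat base change for open immersions, where the first summand of $u$ becomes a split monomorphism. Either fix works, but the appeal to conservativity of $\pi_\ast$ alone does not close the argument.
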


\begin{proof}
    The proof of \cite[\href{https://stacks.math.columbia.edu/tag/08BV}{Tag 08BV}]{StacksProject} works in this generality, yielding a distinguished triangle in $D(\mathcal A)$.
    By the quasi-compact and quasi-separated hypotheses this restricts to $D_{\operatorname{Qcoh}}(\mathcal A)$.
\end{proof}

\begin{lemma}\label{lem:Cech_resolution}
    Let $(X,\mathcal{A})$ be a quasi-compact quasi-separated nc.\ scheme. Suppose $V$ is a quasi-compact open subset of $X$ with associated open immersion $j \colon V \to X$.
    If $\pi \colon (X,\mathcal{A})\to X$ denotes the structure morphism, then $\mathbb{R}j_\ast j^\ast \mathcal{A}$ is quasi-isomorphic to $\mathbb{L}\pi^\ast\mathbb{R}j_\ast j^\ast \mathcal{O}_X$ in $D_{\operatorname{Qcoh}}(\mathcal{A})$.
\end{lemma}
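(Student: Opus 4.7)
The plan is to recognise the desired identification as an incarnation of the projection formula along the open immersion $j$, viewed as a quasi-compact quasi-separated morphism of schemes. First, since $\pi$ is the identity on underlying spaces with ring map $\mathcal{O}_X\to\mathcal{A}$, the derived pullback $\mathbb{L}\pi^\ast$ is nothing other than $-\otimes^L_{\mathcal{O}_X}\mathcal{A}$. Thus the claim reads
\[
\mathbb{R}j_\ast\mathcal{O}_V \otimes^L_{\mathcal{O}_X} \mathcal{A} \;\simeq\; \mathbb{R}j_\ast j^\ast\mathcal{A} \qquad \text{in } D(\mathcal{A}).
\]

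Next I would build a canonical $\mathcal{A}$-linear morphism from left to right. The ring map $\mathcal{O}_X\to\mathcal{A}$ pulls back to $\mathcal{O}_V\to j^\ast\mathcal{A}$ and pushes forward to $\mathbb{R}j_\ast\mathcal{O}_V\to \mathbb{R}j_\ast j^\ast\mathcal{A}$ in $D(\mathcal{O}_X)$. Since $\mathbb{R}j_\ast j^\ast\mathcal{A}$ carries a natural action of $j_\ast j^\ast\mathcal{A}$, hence of $\mathcal{A}$ via the unit $\mathcal{A}\to j_\ast j^\ast\mathcal{A}$, the tensor/restriction adjunction for $\pi$ extends the previous morphism to an $\mathcal{A}$-linear map
\[
\Phi\colon \mathbb{R}j_\ast\mathcal{O}_V \otimes^L_{\mathcal{O}_X} \mathcal{A} \longrightarrow \mathbb{R}j_\ast j^\ast\mathcal{A}.
\]

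To verify that $\Phi$ is a quasi-isomorphism, Lemma~\ref{lem:structure_pushforward_reflects_0-objects} lets us forget the $\mathcal{A}$-structure and check after applying $\pi_\ast$, i.e.\ in $D(\mathcal{O}_X)$. But on the level of underlying $\mathcal{O}_X$-complexes, $\Phi$ coincides with the classical projection formula morphism for the qcqs open immersion $j$ applied to $\mathcal{G}=\mathcal{O}_V$ and $\mathcal{F}=\mathcal{A}$. As both are quasi-coherent and $j$ is qcqs (being a quasi-compact open immersion into a qcqs scheme), this map is an isomorphism by the standard projection formula (see e.g.\ \cite{StacksProject}).

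The only subtlety, and the step I would take most care over, is confirming that $\Phi$ and the scheme-theoretic projection formula map really do coincide after restriction of scalars along $\pi$. This is a routine, if slightly fiddly, adjunction diagram-chase keeping track of the ring and module structures; no other real obstacle is anticipated.
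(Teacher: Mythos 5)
Your proof is correct, but it takes a genuinely different route from the paper's. The paper argues directly: checking the quasi-isomorphism is local, so one reduces to $X=\operatorname{Spec}(R)$ affine with $V$ covered by finitely many basic opens $D(f_i)$, identifies $\mathbb{R}j_\ast j^\ast\mathcal{O}_X$ with the explicit alternating \v{C}ech complex $C = (\prod_i R_{f_i} \to \cdots \to R_{f_1\cdots f_l})$, observes $C$ is a bounded complex of flat $R$-modules so that $\mathbb{L}\pi^\ast$ is computed by the underived tensor $C\otimes_R A$, and recognises the result as the \v{C}ech complex computing $\mathbb{R}j_\ast j^\ast\mathcal{A}$. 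You instead package the same content into the projection formula for the qcqs morphism $j$ and the quasi-coherent object $\mathcal{A}$: $\mathbb{R}j_\ast\mathcal{O}_V\otimes^{\mathbb{L}}_{\mathcal{O}_X}\mathcal{A}\simeq \mathbb{R}j_\ast j^\ast\mathcal{A}$. Both the map you construct and the paper's canonical map arise from the unit $\mathcal{O}_X\to\mathcal{A}$, so the comparison step you flag is indeed just a bookkeeping check. The trade-off is that your argument is conceptually cleaner and shorter but invokes the full projection formula for qcqs morphisms (which is ultimately itself proven by a local/\v{C}ech argument not unlike the paper's), whereas the paper's argument is self-contained and fully elementary. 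One small point worth making explicit in your version: one should also record that both sides lie in $D_{\operatorname{Qcoh}}(\mathcal{A})$ (the right-hand side because $j$ is qcqs and $j^\ast\mathcal{A}$ quasi-coherent, the left-hand side because $\mathbb{L}\pi^\ast$ preserves quasi-coherence), which the paper handles implicitly via the explicit flat \v{C}ech model.
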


The following two lemmas are special instances of what may be considered `flat base change' for noncommutative schemes.

\begin{proof}
    The structure morphism $\mathcal{O}_X\to \pi_\ast\mathcal{A}$ gives us a morphism $\mathbb{L}\pi^\ast\mathbb{R}j_\ast j^\ast \mathcal{O}_X \to \mathbb{R}j_\ast j^\ast \mathcal{A}$. 
    Checking that this is a quasi-isomorphism is local on $X$, so we may assume $X=\operatorname{Spec} R$ is affine and the complement of $V$ is generated by $f_1,\dots, f_l$ in $R$.
    The global sections of $\mathcal{A}$ give us a $R$-algebra, which we denote by $A$.
    
    Let $C$ denote the following complex of $R$-modules
    \begin{displaymath}
        \prod_{i}R_{f_i} \to \prod_{i_1 < i_2}R_{f_{i_1}f_{i_2}} \to \dots \to R_{f_1\dots f_l}
    \end{displaymath}
    and note that $C$ is quasi-isomorphic to $\mathbb{R}j_\ast j^\ast \mathcal{O}_X$ in $D^b_{\operatorname{coh}}(X)$ as it is just the \v{C}ech resolution of $\mathbb{R}j_\ast(\mathcal{O}_V)$. This is a bounded complex of flat $R$-modules, so $\mathbb{L}\pi^\ast\mathbb{R}j_\ast j^\ast \mathcal{O}_X=\pi^\ast C$ is the following complex of $A$-modules:
    \begin{displaymath}
        \prod_{i} A_{f_i} \to \prod_{i_1 < i_2} A_{f_{i_1}f_{i_2}} \to \dots \to A_{f_1\dots f_l}.
    \end{displaymath}
    But this is exactly the \v{C}ech resolution of $\mathbb{R}j_\ast(j^\ast\mathcal{A})$, and hence is quasi-isomorphic to $\mathbb{R}j_\ast j^\ast \mathcal{A}$, which is what we needed to show.
\end{proof}

\begin{lemma}\label{lem:flat_base_change}
    Let $(X,\mathcal{A})$ be a quasi-compact quasi-separated nc.\ scheme admitting an open cover $X=U \cup V$ and denote the associated open immersions by 
    \begin{equation}\label{eq:openimm}
        \begin{tikzcd}
            {U\cap V} & U \\
        	V & X\rlap{ .}
        	\arrow["{j^{\prime}}", from=1-1, to=1-2]
        	\arrow["{h^{\prime}}"', from=1-1, to=2-1]
        	\arrow["h", from=1-2, to=2-2]
        	\arrow["j"', from=2-1, to=2-2]
            \arrow["g"{description}, from=1-1, to=2-2]
        \end{tikzcd}
    \end{equation}
    Then $\mathbb{R}j^\prime_\ast j^{\prime,\ast} h^\ast \mathcal{A}$ is quasi-isomorphic to $h^\ast \mathbb{R}j_\ast j^\ast \mathcal{A}$ in $D_{\operatorname{Qcoh}}(h^* \mathcal{A})$.
\end{lemma}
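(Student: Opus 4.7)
The plan is to reduce to the commutative case by invoking Lemma~\ref{lem:Cech_resolution}, and then apply classical flat base change for open immersions.

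First I would set up notation: let $\pi\colon(X,\mathcal A)\to X$ denote the structure morphism of our nc.\ scheme, and $\pi_U\colon (U,h^\ast\mathcal A)\to U$ the structure morphism of the restriction, so the square of nc.\ schemes induced by $h$ commutes in the sense that $\pi\circ h=h\circ\pi_U$ (with a mild abuse of notation). This gives a natural isomorphism of derived pullbacks $h^\ast\mathbb{L}\pi^\ast\cong\mathbb{L}\pi_U^\ast h^\ast$ on $D_{\operatorname{Qcoh}}$. Note also that the hypotheses of Lemma~\ref{lem:Cech_resolution} are satisfied for $(X,\mathcal A)$ and the open $V\hookrightarrow X$, and similarly for $(U,h^\ast\mathcal A)$ and the open $U\cap V\hookrightarrow U$, since $X$ is quasi-compact quasi-separated and open immersions into such preserve quasi-compactness of the target of $j'$.

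Applying Lemma~\ref{lem:Cech_resolution} to the right-hand side yields
\begin{equation*}
    h^\ast\mathbb{R}j_\ast j^\ast\mathcal A\;\cong\;h^\ast\mathbb{L}\pi^\ast\mathbb{R}j_\ast j^\ast\mathcal O_X\;\cong\;\mathbb{L}\pi_U^\ast\bigl(h^\ast\mathbb{R}j_\ast j^\ast\mathcal O_X\bigr),
\end{equation*}
and applying Lemma~\ref{lem:Cech_resolution} to the left-hand side (for the nc.\ scheme $(U,h^\ast\mathcal A)$ and the open $U\cap V$) yields
\begin{equation*}
    \mathbb{R}j'_\ast j'^{,\ast}h^\ast\mathcal A\;\cong\;\mathbb{L}\pi_U^\ast\mathbb{R}j'_\ast j'^{,\ast}\mathcal O_U.
\end{equation*}
It therefore suffices to produce a quasi-isomorphism $h^\ast\mathbb{R}j_\ast j^\ast\mathcal O_X\cong\mathbb{R}j'_\ast j'^{,\ast}\mathcal O_U$ in $D_{\operatorname{Qcoh}}(U)$, and then apply $\mathbb{L}\pi_U^\ast$.

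This remaining identification is exactly flat base change in the commutative setting along the Cartesian square \eqref{eq:openimm} of open immersions; since $h$ (and all other maps in the square) are flat and the situation is quasi-compact and quasi-separated, the base-change morphism $h^\ast\mathbb{R}j_\ast\Rightarrow \mathbb{R}j'_\ast h'^{,\ast}$ is a quasi-isomorphism on $D_{\operatorname{Qcoh}}$, see e.g.\ \cite[\href{https://stacks.math.columbia.edu/tag/02KH}{Tag 02KH}]{StacksProject}. Combined with $h'^{,\ast}j^\ast=(j\circ h')^\ast=(h\circ j')^\ast=j'^{,\ast}h^\ast$ and $h^\ast\mathcal O_X=\mathcal O_U$, this gives what we want. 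The main delicate point is keeping track of the two commuting squares (one of underlying schemes, one of nc.\ schemes) and checking that the reduction via Lemma~\ref{lem:Cech_resolution} is natural enough to combine cleanly with commutative flat base change; once that bookkeeping is done, the proof is a short diagram chase.
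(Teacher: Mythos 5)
Your proof is correct but takes a genuinely different route from the paper's. The paper argues directly with injective resolutions: taking an injective resolution $I$ of $\mathcal{A}|_V$, noting that $h'^{,\ast}I$ remains an injective resolution of $\mathcal{A}|_{U\cap V}$ (restriction along open immersions preserves injectives), and then reducing the comparison $j'_\ast h'^{,\ast}I\cong h^\ast j_\ast I$ to a completely underived flat base change that can be checked by inspecting sections over opens. You instead feed everything through Lemma~\ref{lem:Cech_resolution} to strip off the noncommutative structure (twice), applying $\mathbb{L}\pi_U^\ast$ at the end, and invoke commutative derived flat base change as a black box. Both work; the paper's version is a bit more elementary and self-contained (it does not need any derived machinery, nor does it rely on the preceding lemma), whereas yours has the pedagogical merit of making the reduction to the commutative setting explicit, which is the recurring theme of the section. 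One small caveat in your route: applying Lemma~\ref{lem:Cech_resolution} on the $U$-side requires $(U,h^\ast\mathcal A)$ to satisfy that lemma's hypotheses, i.e.\ $U$ should be quasi-compact and $U\cap V$ quasi-compact -- quasi-compactness of arbitrary opens in a qcqs scheme is not automatic, and your parenthetical justification (``open immersions into such preserve quasi-compactness of the target of $j'$'') does not actually establish this. This is harmless in the downstream application, where $U$ is affine and $V$ is quasi-compact (so $U\cap V$ is quasi-compact by quasi-separatedness), but it means your proof needs a mild added hypothesis that the paper's injective-resolution argument avoids entirely. You should also check that $h^\ast\mathbb{L}\pi^\ast\cong\mathbb{L}\pi_U^\ast h^\ast$ is legitimate; it is, since $h$ is flat and restriction preserves flat resolutions, but it deserves a word.
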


\begin{proof}
    Consider an injective resolution $I$ of $j^\ast \mathcal{A}=\mathcal{A}|_{V}$ (in $D_{\operatorname{Qcoh}}(\mathcal{A}|_{ V})$) . 
    As $h^{\prime, \ast} I$ is an injective resolution of $h^{\prime, \ast}j^{\ast} \mathcal{A}= j^{\prime, \ast}h^{\ast} \mathcal{A}=\mathcal{A}|_{U\cap V}$ (in $D_{\operatorname{Qcoh}}(\mathcal{A}|_{U\cap V})$) we have the string of quasi-isomorphisms
    \begin{displaymath}
        \mathbb{R}j^\prime_\ast j^{\prime,\ast} h^\ast \mathcal{A} \cong \mathbb{R}j^\prime_\ast h^{\prime,\ast} j^\ast \mathcal{A} \cong j^\prime_\ast h^{\prime,\ast} I \cong h^\ast j_\ast I \cong h^\ast \mathbb{R}j_\ast j^\ast \mathcal{A}\quad\text{(in $D_{\operatorname{Qcoh}}(h^\ast \mathcal{A})$),}
    \end{displaymath}
    where we used a very special case of flat base change, which can also simply be checked as it involves no derived functors.
\end{proof}

The following is a noncommutative variant of \cite[Theorem 6.2]{Neeman:2021} where the result was shown for the special case $\mathcal{A}=\mathcal{O}_X$.

\begin{proposition}\label{prop:compact_boundedness_noncommuative}
    Let $(X,\mathcal{A})$ be a quasi-compact separated nc.\ scheme and $j \colon V \to X$ be an open immersion with $V$ a quasi-compact open subset of $X$. Suppose that $G$ is a compact generator for $D_{\operatorname{Qcoh}} (\mathcal{A})$. If $P$ is in $\operatorname{perf}(j^\ast \mathcal{A})$, then there exists an integer $N\geq 0$ such that $\mathbb{R}j_\ast P$ is in $\overline{\langle G \rangle}_N$.
\end{proposition}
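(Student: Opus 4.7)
The plan is as follows. First, I would reduce to the special case $P=j^\ast G$. By \cref{cor:locsequenceperf}, the restriction $j^\ast G$ classically generates $\operatorname{perf}(j^\ast\mathcal{A})$, so $P\in\langle j^\ast G\rangle_k$ for some $k\geq 1$. Since $\mathbb{R}j_\ast$ is triangulated and preserves finite coproducts and summands, one obtains $\mathbb{R}j_\ast P\in\langle\mathbb{R}j_\ast j^\ast G\rangle_k$. A uniform finite bound $\mathbb{R}j_\ast j^\ast G\in\overline{\langle G\rangle}_M$ then combines via the standard level inequality (if $X\in\overline{\langle G\rangle}_M$ then $\langle X\rangle_k\subseteq\overline{\langle G\rangle}_{kM}$) to give $\mathbb{R}j_\ast P\in\overline{\langle G\rangle}_{kM}$, proving the proposition. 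It therefore suffices to bound $\mathbb{R}j_\ast j^\ast G$.

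Next, I would reduce to the case where $V$ is affine by a \v{C}ech-type argument. Since $V$ is quasi-compact and $X$ is separated, cover $V$ by finitely many affine open subschemes $V_1,\dots,V_m$ of $X$; separatedness forces every finite intersection $V_I:=\bigcap_{i\in I}V_i$ to be affine and the inclusion $a_I\colon V_I\hookrightarrow X$ to be an affine morphism, so $\mathbb{R}a_{I,\ast}=a_{I,\ast}$ is underived. Iterating the Mayer--Vietoris triangles of \cref{lem:mv_distinguished triangle_quasi_coh_algebra} (equivalently the \v{C}ech-to-derived-functor resolution) expresses $\mathbb{R}j_\ast j^\ast G$ as an $m$-step iterated cone of the objects $\bigoplus_{|I|=p+1}a_{I,\ast}(G|_{V_I})$ for $p=0,\dots,m-1$. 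Hence it is enough to bound each $a_{I,\ast}(G|_{V_I})$ in $\overline{\langle G\rangle}$ uniformly in $I$.

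For this affine case, fix an affine open $V'=V_I$ with affine open immersion $j'\colon V'\to X$. Then $G|_{V'}$ is a perfect complex of modules over the ring $\mathcal{A}(V')$, hence quasi-isomorphic to a bounded complex of finitely generated projective $\mathcal{A}(V')$-modules; thus $G|_{V'}\in\langle j'^\ast\mathcal{A}\rangle_p$ for some $p$. As $j'_\ast$ is exact and preserves finite coproducts, $j'_\ast(G|_{V'})\in\langle j'_\ast j'^\ast\mathcal{A}\rangle_p$. Now \cref{lem:Cech_resolution} identifies $j'_\ast j'^\ast\mathcal{A}\simeq\mathbb{L}\pi^\ast\mathbb{R}j'_\ast j'^\ast\mathcal{O}_X$, where $\pi\colon(X,\mathcal{A})\to X$ is the structure morphism, and the commutative analogue \cite[Theorem~6.2]{Neeman:2021} applied to $(X,\mathcal{O}_X)$ gives $\mathbb{R}j'_\ast j'^\ast\mathcal{O}_X\in\overline{\langle G_X\rangle}_{N'}$ for a compact generator $G_X$ of $D_{\operatorname{Qcoh}}(X)$. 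Applying $\mathbb{L}\pi^\ast$ (which preserves small coproducts, cones, and summands) yields $j'_\ast j'^\ast\mathcal{A}\in\overline{\langle\mathbb{L}\pi^\ast G_X\rangle}_{N'}$. By \cref{rmk:pullback_cpt_gen_to_dqcoh}, $\mathbb{L}\pi^\ast G_X$ is a compact generator of $D_{\operatorname{Qcoh}}(\mathcal{A})$, and hence by \cref{prop:NCRouquier} it lies in $\langle G\rangle_q$ for some $q$; combining these yields $j'_\ast(G|_{V'})\in\overline{\langle G\rangle}_{pqN'}$.

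The main technical effort lies in the careful bookkeeping of finite levels through the three successive reductions---the $k$-step replacement of $P$ by $j^\ast G$, the $m$-step Mayer--Vietoris iteration, and the comparison of the given compact generator $G$ with the pullback $\mathbb{L}\pi^\ast G_X$ arising from Neeman's commutative result---all of which compound into some finite $N$. The separatedness of $X$ enters essentially in ensuring that finite intersections of affines remain affine, which is what allows each $a_{I,\ast}$ to be exact and the \v{C}ech resolution to terminate cleanly; the genuinely noncommutative content is the combination of \cref{lem:Cech_resolution} with the fact that any two compact generators of $D_{\operatorname{Qcoh}}(\mathcal{A})$ are finitely interchangeable within $\operatorname{perf}(\mathcal{A})$.
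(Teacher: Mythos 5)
Your proof is correct, but it takes a genuinely different route from the paper's. The paper reproves the content of \cite[Theorem 6.2]{Neeman:2021} in the noncommutative setting: after factoring $j$ through a chain $V = X_0 \subseteq X_1 \subseteq \cdots \subseteq X_n = X$ to reduce to $X = U\cup V$ with $U$ affine, it forms the cone $Q$ of $\mathcal{A}\to\mathbb{R}j_\ast j^\ast\mathcal{A}$, shows via an explicit \v{C}ech/finite-projective-resolution computation and a support argument that $\operatorname{Hom}(Q, D_{\operatorname{Qcoh}}^{\leq -q}(\mathcal{A}))=0$ for some $q$, and then invokes approximability of $D_{\operatorname{Qcoh}}(\mathcal{A})$ (Proposition~\ref{prop:approx_for_dqcoh_coherent_algebra}) to deduce $Q\in\overline{\langle G\rangle}_{N'}$; the general $P$ is handled at the end with one further Mayer--Vietoris triangle. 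You instead reduce to $P=j^\ast G$ via Corollary~\ref{cor:locsequenceperf}, break $V$ into an affine \v{C}ech cover, and for each affine piece import Neeman's commutative Theorem 6.2 as a black box, transporting it across $\pi$ using Lemma~\ref{lem:Cech_resolution} and the fact that any two compact generators of $D_{\operatorname{Qcoh}}(\mathcal{A})$ classically generate each other. Your approach sidesteps approximability of $D_{\operatorname{Qcoh}}(\mathcal{A})$ entirely and avoids re-deriving the hard commutative input, at the price of leaning on that input as a black box; the paper's proof is more self-contained in the noncommutative setting and also yields the vanishing statement $\operatorname{Hom}(Q,D^{\leq -q}_{\operatorname{Qcoh}}(\mathcal{A}))=0$ directly, which is of independent use. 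One small citation nit: the step ``$\mathbb{L}\pi^\ast G_X\in\langle G\rangle_q$'' really uses that a compact generator classically generates $\mathcal{T}^c$ (the remark following the definition of compact generator) together with Proposition~\ref{prop:NCRouquier}, not the latter alone.
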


\begin{proof}
    As usual let $\pi \colon (X,\mathcal A)\to X$ denote the structure morphism (and similarly for its restriction to open subsets of $X$).
    
    As $X$ is quasi-compact, we can find an affine open cover $X = \cup_1^n U_i$. Set $X_l = V \cup (\cup_1^l U_i)$ where $1\leq l \leq n$. The map $j \colon V \to X$ factors as
    \begin{displaymath}
        V =: X_0 \to X_1 \to \dots \to X_n = X,
    \end{displaymath}
    and so, it is enough to prove the desired claim for $X = U \cup V$ where $U=\operatorname{Spec}(R)$ is an affine open subscheme. 
    Label the various open immersions as in \eqref{eq:openimm}. Note that $U\cap V$ is a quasi-compact open subscheme of $U$. So there exist finitely many elements $f_1,\dots,f_l \in R$ such that
    \begin{displaymath}
    U \cap V = \bigcup_{i=1}^{l} \operatorname{Spec}(R_{f_i}) \subseteq \operatorname{Spec}(R) = U.
    \end{displaymath}
    First, we prove the statement for $P= j^\ast \mathcal{A}$. There is a distinguished triangle in $D_{\operatorname{Qcoh}}(\mathcal{A})$ obtained from the unit of adjunction:
    \begin{equation}\label{eq:cbnc1}
        Q \to  \mathcal{A}  \to \mathbb{R}j_\ast j^\ast \mathcal{A} \to Q[1].
    \end{equation}
    Similarly there is a distinguished triangle in $D_{\operatorname{Qcoh}}(U) \cong D(\operatorname{Mod}(R))$ also coming from the corresponding unit of adjunction:
    \begin{equation}\label{eq:cbnc1'}
        Q^\prime \to \mathcal{O}_U \to \mathbb{R}j^\prime_\ast j^{\prime,\ast} \mathcal{O}_{U} \to Q^\prime [1].
    \end{equation}
    We show that $Q^\prime$ has a finite projective resolution in $D(\operatorname{Mod}(R))$. The map $\mathcal{O}_U \to \mathbb{R}j^\prime_\ast j^{\prime,\ast} \mathcal{O}_{U}$ is exactly the map $R \to C$ in $D(\operatorname{Mod}(R))$, where $C := \Pi_{i}R_{f_i} \to \Pi_{i_1 < i_2}R_{f_{i_1}f_{i_2}} \to \dots \to R_{f_1\dots f_l}$ is just the \v{C}ech resolution of $\mathbb{R}j^\prime_\ast j^{\prime,\ast} \mathcal{O}_{U}$. So, $Q^\prime$ is exactly the complex given by $R \to \Pi_{i}R_{f_i} \to \Pi_{i_1 < i_2}R_{f_{i_1}f_{i_2}} \to \dots \to R_{f_1\dots f_l}$. This immediately gives us that $Q^\prime \cong \otimes_{i=1}^{l}Q^\prime_i$, where $Q^\prime_i := R \to R_{f_i}$ for each $i \leq i \leq l$. Now, it is straightforward to verify that $Q^\prime_i \cong \varinjlim_n (R \xrightarrow{f_i^n} R)$ in the category of chain complexes. And so, $Q^\prime_i$ is the homotopy colimit of the complexes $R \xrightarrow{f_i^n} R$, and thus has a finite projective resolution. Hence,  $Q^\prime \cong \otimes_{i=1}^{l}Q^\prime_i$ has a finite projective resolution, that is, it is quasi-isomorphic to a bounded complex of projective modules in $D(\operatorname{Mod}(R))$.
    
    Pulling back the above triangle \ref{eq:cbnc1'} via $\pi^\ast$ yields a distinguished triangle in $D_{\operatorname{Qcoh}}(h^\ast \mathcal{A})$:
    \begin{equation}\label{eq:cbnc2}
        \mathbb{L}\pi^\ast Q^\prime \to h^\ast\mathcal{A}  \to \mathbb{L}\pi^\ast \mathbb{R}j^\prime_\ast j^{\prime,\ast} \mathcal{O}_{U} \to \pi^\ast Q^\prime [1].
    \end{equation}
    By Lemmas \ref{lem:Cech_resolution} and \ref{lem:flat_base_change} we have quasi-isomorphisms
    \begin{displaymath}
        \mathbb{L}\pi^\ast \mathbb{R}j^\prime_\ast j^{\prime,\ast} \mathcal{O}_{U}\cong \mathbb{R}j^\prime_\ast {j^\prime}^\ast h^* \mathcal{A}\cong h^\ast \mathbb{R}j_\ast j^\ast \mathcal{A}
    \end{displaymath}
    in $D_{\operatorname{Qcoh}}(h^\ast \mathcal{A})$.
    As both $h^\ast Q$ and $\mathbb{L}\pi^\ast Q'$ are supported on $X\setminus Z$, the distinguished triangle \eqref{eq:cbnc2} is isomorphic to
    \begin{displaymath}
        h^\ast Q \to  h^\ast \mathcal{A}  \to h^\ast \mathbb{R}j_\ast j^\ast \mathcal{A} \to h^\ast Q[1]\quad\text{(in $D_{\operatorname{Qcoh}}(h^\ast \mathcal{A})$}).
    \end{displaymath}
    Consequently, $h^\ast Q$ is a quasi-isomorphic to $\mathbb{L}\pi^\ast Q^\prime$ (in $D_{\operatorname{Qcoh}}(h^\ast \mathcal{A})$). 
    
    This implies that $h^\ast Q \cong \pi^\ast Q^\prime$ can be equally be represented by a bounded complex of projective modules in $D_{\operatorname{Qcoh}}(h^\ast \mathcal{A})$, and so, there exists an integer $q > 0$ such that
    \begin{equation}\label{eq:cbnc3}
        \operatorname{Hom}\big(h^\ast Q, D_{\operatorname{Qcoh}}^{\leq -q}(h^*\mathcal{A})\big) = 0.
    \end{equation}
    
    Let $F$ be an object of $D_{\operatorname{Qcoh}}(\mathcal{A})$. By Lemma \ref{lem:mv_distinguished triangle_quasi_coh_algebra} we have a Mayer-Vietoris distinguished triangle
    \begin{displaymath}
        F \to \mathbb{R}j_\ast j^\ast F \oplus \mathbb{R}h_\ast h^\ast F \to \mathbb{R}g_\ast g^\ast F \to F[1].
    \end{displaymath}
    Applying the functor $\operatorname{Hom}(Q,-)$ to this distinguished triangle, yields a long exact sequence. 
    Since the following terms vanish, for any integer $n$,
    \begin{displaymath}
        \begin{aligned}
            \operatorname{Hom}(Q, \mathbb{R}j_\ast j^\ast F[n])
            &=\operatorname{Hom}(j^\ast Q,  j^\ast F[n]) 
            \\&= \operatorname{Hom}( g^\ast Q, g^\ast F) 
            \\&= \operatorname{Hom}(Q,\mathbb{R}g_\ast g^\ast F[n])=0,
        \end{aligned}
    \end{displaymath}    
    as $Q$ is supported on $X \setminus V$, we obtain
    \begin{displaymath}
        \operatorname{Hom}(Q,F) \cong \operatorname{Hom}(Q,\mathbb{R}h_\ast h^\ast F) \cong \operatorname{Hom}( h^\ast Q,h^\ast F).
    \end{displaymath}
    Hence this $\operatorname{Hom}$ vanishes if $F$ is an object of $D_{\operatorname{Qcoh}}^{\leq -q}(\mathcal{A})$ by Equation \eqref{eq:cbnc3}.
    
    As $\mathbb{R}j_\ast j^\ast \mathcal{A}$ belongs to $D_{\operatorname{Qcoh}}^{\leq t}(\mathcal{A})$ for some $t \geq 0$ by Lemma~\ref{lem:pushforward_cohomologically_bounded}, $Q$ belongs to $D_{\operatorname{Qcoh}}^{\leq t+1}(\mathcal{A})$.
    Using approximability of $D_{\operatorname{Qcoh}}(\mathcal{A})$, see Proposition~\ref{prop:approx_for_dqcoh_coherent_algebra}, we obtain a distinguished triangle $E \to Q \to D \to E[1]$ with $E$ is in $\overline{\langle G \rangle}_{N^\prime}$ and $D$ is in $D_{\operatorname{Qcoh}}(\mathcal{A})^{\leq -q}$ for some $N'>0$.
    By the previous paragraph map $Q \to D$ vanishes, and so, $Q$ belongs to $\overline{\langle G \rangle}_{N^\prime}$. 
    As $\mathcal{A}$ is a compact object, there exists $N^{\prime \prime} > 0$ such that $\mathcal{A}$ is in $\overline{\langle G \rangle}_{N^{\prime \prime}}$. 
    Hence, we obtain that $\mathbb{R}j_\ast j^\ast \mathcal{A}$ is in $\overline{\langle G \rangle}_N$ for some $N\geq 0$ by \eqref{eq:cbnc1}.

    Lastly, we prove the claim for an arbitrary object $P$ in $\operatorname{perf}(j^\ast \mathcal{A})$. Assume $j^\prime,h,g$ are as previously defined. Let $P$ be in $\operatorname{perf}(j^\ast \mathcal{A})$. By Corollary~\ref{cor:locsequenceperf}, there is an object $P^\prime$ in $\operatorname{perf}(\mathcal{A})$ such that $j^\ast P^\prime$ finitely builds $P$ in one cone in $D_{\operatorname{Qcoh}}(j^\ast \mathcal{A})$. Hence, we see that $\mathbb{R}j_\ast P$ belongs to $\overline{\langle \mathbb{R}j_\ast j^\ast P^\prime \rangle}_1$, and so it suffices to show that there exists $L \geq 0$ such that $\mathbb{R}j_\ast j^\ast P^\prime$ belongs to $\overline{\langle G \rangle}_L$. The work above tells us there exists $N_1 \geq 0$ such that $\mathbb{R}g_\ast g^\ast \mathcal{A}$ is in $\overline{\langle G \rangle}_{N_1}$. Note that $g^\ast P^\prime$ is in $\operatorname{perf}(g^\ast \mathcal{A})$, and $g^\ast \mathcal{A}$ is a compact generator for $D_{\operatorname{Qcoh}}(g^\ast \mathcal{A})$ as $U\cap V$ is quasi-affine, so we can choose an $N_2\geq 0$ such that $g^\ast P^\prime$ is in $\overline{\langle g^\ast \mathcal{A} \rangle}_{N_2}$. This tells us that $\mathbb{R}g_\ast g^\ast P^\prime$ is in $\overline{\langle \mathbb{R} g_\ast g^\ast \mathcal{A} \rangle}_{N_2}$. It follows that $\mathbb{R}g_\ast g^\ast P^\prime$ is in $\overline{\langle G \rangle}_{N_1 N_2}$. Choose $N_3$ such that $P^\prime$ belongs to $\overline{\langle G \rangle}_{N_3}$, which exists as $P^\prime$ is in $\langle G \rangle$. 
    Again using a Mayer-Vietoris distinguished triangle in $D_{\operatorname{Qcoh}}(\mathcal{A})$
    \begin{displaymath}
        P^\prime \to \mathbb{R}j_\ast j^\ast P^\prime \oplus \mathbb{R}h_\ast h^\ast P^\prime \to \mathbb{R}g_\ast g^\ast P^\prime \to P^\prime [1].
    \end{displaymath}    
    and noting that both $P^\prime$ and $\mathbb{R}g_\ast g^\ast P^\prime$ belong to $\overline{\langle G \rangle}_N$ where $N$ is largest of the $N_1 N_2, N_3$, ensures $\mathbb{R}j_\ast j^\ast P^\prime \oplus \mathbb{R}h_\ast h^\ast P^\prime$ and its direct summands are in $\overline{\langle G \rangle}_{2N}$. This completes the proof.
\end{proof}

%%%%%%%%%%%%%%%%%%%%%%%%%%%%%%%%%%%%
\section{Applications}
\label{sec:applications}
%%%%%%%%%%%%%%%%%%%%%%%%%%%%%%%%%%%%

%%%%%%%%%%%%%%%%%%%%%%%%%%%%%%%%%%%%
\subsection{Regularity}
\label{sec:applications_regularity}
%%%%%%%%%%%%%%%%%%%%%%%%%%%%%%%%%%%%

The following can be viewed as a noncommutative analog of \cite[Theorem 0.5]{Neeman:2021}, which asserted a relationship to between a quasi-compact separated scheme $X$ being covered by affine spectra of finite global dimension to $\operatorname{perf}(X)$ admitting a strong generator.

\begin{theorem}\label{thm:coherent_alg_perf_strong_gen}
    Let $(X,\mathcal{A})$ be a quasi-compact separated nc.\ scheme. The following are equivalent:
    \begin{enumerate}
        \item There exists a perfect strong $\oplus$-generator for $D_{\operatorname{Qcoh}}(\mathcal{A})$.
        \item For each affine open $U \subseteq X$, $D_{\operatorname{Qcoh}}(\mathcal{A}|_U)$ has a perfect strong $\oplus$-generator.
        \item $ \mathcal{A}(U)$ has finite global dimension for each affine open $U \subseteq X$.
        \item $ \mathcal{A}(U_i)$ has finite global dimension for each open $U_i$ in an affine open cover $X=\cup^n_{i=1}U_i$. 
    \end{enumerate}
    Any of these equivalent conditions imply that $\operatorname{perf}(\mathcal{A})$ has finite Rouquier dimension.
\end{theorem}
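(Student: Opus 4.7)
The plan is to establish the cycle $(3)\Rightarrow(4)\Rightarrow(1)\Rightarrow(2)\Rightarrow(3)$, and then deduce the Rouquier dimension statement from $(1)$. Two of the four implications are essentially formal. The implication $(3)\Rightarrow(4)$ is immediate. For $(1)\Rightarrow(2)$, given a perfect strong $\oplus$-generator $G$ with $\overline{\langle G\rangle}_{N+1}=D_{\operatorname{Qcoh}}(\mathcal A)$ and any open immersion $j\colon U\hookrightarrow X$, I would note that $j^\ast$ is a Verdier localisation by Proposition~\ref{prop:NCThomasonTrobaugh} (hence essentially surjective) and commutes with small coproducts, shifts, cones, and direct summands, so $\overline{\langle j^\ast G\rangle}_{N+1}=D_{\operatorname{Qcoh}}(\mathcal A|_U)$, with $j^\ast G$ still perfect. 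For $(2)\Rightarrow(3)$, the affine case reduces to pure ring theory on $U=\operatorname{Spec}R$ with $A=\mathcal A(U)$: a perfect strong $\oplus$-generator $P$ of $D(A)$ lies in $\langle A\rangle_M$ for some $M$ (being perfect), and combining this with $\overline{\langle P\rangle}_{N+1}=D(A)$ gives $\overline{\langle A\rangle}_{M(N+1)}=D(A)$, which by standard Postnikov-tower arguments forces $A$ to have finite global dimension.

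The main content is $(4)\Rightarrow(1)$. I would first pick a compact generator $G$ of $D_{\operatorname{Qcoh}}(\mathcal A)$, which exists by Proposition~\ref{prop:NCRouquier}. Since $X$ is separated, every intersection $W=U_{i_1}\cap\cdots\cap U_{i_r}$ of opens from the cover is again affine, and $\mathcal A(W)$ is obtained from $\mathcal A(U_{i_1})$ by central localisation (at elements of $\mathcal O_X$), which preserves finite global dimension; thus all such $\mathcal A(W)$ have global dimension bounded by a common $d$. For each such $W$ with inclusion $j_W\colon W\hookrightarrow X$, the standard equivalence between finite global dimension and the strong $\oplus$-generator property on affines yields $\overline{\langle\mathcal A|_W\rangle}_{d+1}=D_{\operatorname{Qcoh}}(\mathcal A|_W)$, and Proposition~\ref{prop:compact_boundedness_noncommuative} applied to the perfect object $\mathcal A|_W$ produces an integer $N_W$ with $\mathbb R j_{W,\ast}\mathcal A|_W\in\overline{\langle G\rangle}_{N_W}$. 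Now given any $F\in D_{\operatorname{Qcoh}}(\mathcal A)$, iterating the Mayer--Vietoris triangle of Lemma~\ref{lem:mv_distinguished triangle_quasi_coh_algebra} across the finite cover writes $F$ as an iterated extension, of controlled length depending only on $n$, built from terms $\mathbb R j_{W,\ast}j_W^\ast F$; for each such term, $j_W^\ast F\in\overline{\langle\mathcal A|_W\rangle}_{d+1}$, and since $j_W$ is affine $\mathbb R j_{W,\ast}$ preserves small coproducts, so $\mathbb R j_{W,\ast}j_W^\ast F\in\overline{\langle\mathbb R j_{W,\ast}\mathcal A|_W\rangle}_{d+1}\subseteq\overline{\langle G\rangle}_{(d+1)N_W}$. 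Combining these bounds yields an integer $N$, independent of $F$, with $F\in\overline{\langle G\rangle}_N$, so $G$ is a perfect strong $\oplus$-generator. The hard part here is bookkeeping: one must ensure that the bounds arising from the Mayer--Vietoris recursion, from pulling back and pushing forward, and from Proposition~\ref{prop:compact_boundedness_noncommuative} all remain uniform in $F$, which is precisely where finiteness of the cover and separatedness of $X$ are crucial.

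Finally, the Rouquier dimension statement follows from $(1)$ via Lemma~\ref{lem:big_coprod_intersection_big_thick}(3) applied to $D_{\operatorname{Qcoh}}(\mathcal A)$ with its standard $t$-structure, whose $\mathcal T^{\geq 0}$ is closed under coproducts. Both $G$ and any perfect complex lie in $\mathcal T^b_c$, being compact with bounded cohomology, so from $\operatorname{perf}(\mathcal A)\subseteq D_{\operatorname{Qcoh}}(\mathcal A)=\overline{\langle G\rangle}_{N+1}$ the lemma yields $\operatorname{perf}(\mathcal A)\subseteq\langle G\rangle_{N+1}$, giving $\dim\operatorname{perf}(\mathcal A)\leq N$.
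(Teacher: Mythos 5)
Your proof is correct and takes essentially the same route as the paper: the cycle $(3)\Rightarrow(4)\Rightarrow(1)\Rightarrow(2)\Rightarrow(3)$, with the crux $(4)\Rightarrow(1)$ handled by picking a compact generator, applying Proposition~\ref{prop:compact_boundedness_noncommuative} to push-forwards of restrictions of $\mathcal{A}$, and gluing by Mayer--Vietoris; and the Rouquier dimension claim obtained by intersecting $\overline{\langle G\rangle}_N$ with bounded/pseudo-coherent objects. The only differences are cosmetic: the paper cites \cite[Proposition 7.25]{Rouquier:2008} for $(2)\Rightarrow(3)$ where you unpack the Postnikov argument, and for $(4)\Rightarrow(1)$ the paper states the Mayer--Vietoris induction in terms of $\bigoplus_i\mathbb{R}s_{i,\ast}s_i^\ast\mathcal{A}$ alone while you explicitly track the affine intersections $W$ (equivalent, since a push-forward from an intersection factors through a push-forward from one of the $U_i$), and for the final claim the paper invokes \cite[Proposition 1.9]{Neeman:2021} instead of the equivalent Lemma~\ref{lem:big_coprod_intersection_big_thick}(3).
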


\begin{proof}
    It is evident that $(1)\implies (2)$ using Proposition~\ref{prop:NCThomasonTrobaugh}. Moreover, $(2) \implies (3)$ follows by \cite[Proposition 7.25 $(iii) \implies (i)$]{Rouquier:2008}, and $(3)\implies (4)$ holds trivially. So, it remains to prove $(4)\implies (1)$. 
    For this, suppose $\mathcal{A}(U_i)$ has finite global dimension for each open immersion $s_i \colon U_i \to X$ in an affine open cover $X=\cup^n_{i=1}U_i$. An inductive argument on the number of components of an affine open cover coupled with the Mayer-Vietoris distinguished triangles of Lemma~\ref{lem:mv_distinguished triangle_quasi_coh_algebra} tells us that there exists an integer $N\geq 0$ such that $D_{\operatorname{Qcoh}}(\mathcal{A})=\overline{\langle \oplus^n_{i=1} \mathbb{R} s_{i,\ast} s^\ast_i \mathcal{A} \rangle}_N$. However, if $G$ is a compact generator for $D_{\operatorname{Qcoh}}(\mathcal{A})$, then Proposition~\ref{prop:compact_boundedness_noncommuative} ensures there exists an $L\gg 0$ such that $\oplus^n_{i=1} \mathbb{R} s_{i,\ast} s^\ast_i \mathcal{A}$ belongs to $\overline{\langle G \rangle}_L$. Hence, we see that $\overline{\langle G \rangle}_T = D_{\operatorname{Qcoh}}(\mathcal{A})$ for some $T\geq 0$, which is what we needed to show.

    To see the last claim, note that if there exists a perfect complex $G$ and an integer $T \geq 1$ with $D_{\operatorname{Qcoh}}(\mathcal{A}) = \overline{\langle G \rangle}_T$, then $\operatorname{perf}(\mathcal{A}) = \langle G \rangle_T$ by \cite[Proposition 2.2.4]{Bondal/VandenBergh:2003}.
    By definition this means that $G$ is a strong generator for $\operatorname{perf}(\mathcal{A})$.
\end{proof}

\begin{corollary}\label{cor:alg_perf_strong_gen_weak_version}
     Let $(X,\mathcal{A})$ be a quasi-compact separated nc.\ scheme with $\mathcal{A}(U)$ a Noetherian ring for each affine open $U\subseteq X$. The following are equivalent:
     \begin{enumerate}
        \item There exists a strong generator for $\operatorname{perf}(\mathcal{A})$.
        \item For each affine open $U \subseteq X$, $\operatorname{perf}(\mathcal{A}|_U)$ has a strong generator.
        \item $\mathcal{A}(U)$ has finite global dimension for each affine open $U \subseteq X$.
        \item $ \mathcal{A}(U_i)$ has finite global dimension for each open $U_i$ in an affine open cover $X=\cup^n_{i=1}U_i$. 
    \end{enumerate}
\end{corollary}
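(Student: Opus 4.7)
The plan is to leverage Theorem~\ref{thm:coherent_alg_perf_strong_gen} as heavily as possible, since conditions (3) and (4) are identical to those in the theorem and the final sentence of the theorem already bridges the gap between the two notions of ``generator'' in the direction we need. Since $(3) \Rightarrow (4)$ is trivial, the real task is to establish $(4) \Rightarrow (1)$, $(1) \Rightarrow (2)$, and $(2) \Rightarrow (3)$.

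For $(4) \Rightarrow (1)$, I would simply invoke Theorem~\ref{thm:coherent_alg_perf_strong_gen}: condition (4) of the corollary coincides with condition (4) of the theorem, so it implies the existence of a perfect strong $\oplus$-generator $G$ for $D_{\operatorname{Qcoh}}(\mathcal{A})$. The closing observation in the proof of Theorem~\ref{thm:coherent_alg_perf_strong_gen} (citing \cite[Proposition 1.9]{Neeman:2021}) then upgrades this to $\operatorname{perf}(\mathcal{A}) = \langle G \rangle_T$, so $G$ is a strong generator of $\operatorname{perf}(\mathcal{A})$.

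For $(1) \Rightarrow (2)$, I would use Corollary~\ref{cor:locsequenceperf}: for each affine open $U \subseteq X$, the restriction $j_U^\ast\colon \operatorname{perf}(\mathcal{A}) \to \operatorname{perf}(\mathcal{A}|_U)$ is a Verdier localization up to summands. Hence if $G$ strongly generates $\operatorname{perf}(\mathcal{A})$ with $\operatorname{perf}(\mathcal{A}) = \langle G \rangle_n$, the functoriality of the filtrations $\langle - \rangle_n$ shows $\operatorname{perf}(\mathcal{A}|_U) = \langle j_U^\ast G \rangle_n$ (the summand closure being harmless since $\langle - \rangle_n$ is closed under summands by definition).

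For $(2) \Rightarrow (3)$, on an affine open $U$ we identify $\operatorname{perf}(\mathcal{A}|_U) \simeq \operatorname{perf}(\mathcal{A}(U))$. Since $\mathcal{A}(U)$ is Noetherian by hypothesis, the existence of a strong generator for $\operatorname{perf}(\mathcal{A}(U))$ forces finite global dimension of $\mathcal{A}(U)$ by (an appropriate implication of) Rouquier's \cite[Proposition 7.25]{Rouquier:2008}, applied to the noncommutative Noetherian ring $\mathcal{A}(U)$. This is the one spot where the Noetherian hypothesis genuinely enters, and it is the step I expect to require the most care --- one must ensure the implication of loc.\ cit.\ applies to arbitrary Noetherian (noncommutative) rings and not only to those arising as sections of $\mathcal{O}_X$. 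No serious new ideas should be needed beyond Theorem~\ref{thm:coherent_alg_perf_strong_gen} and the classical affine result, so the proof should fit in a single short paragraph once these references are in place.
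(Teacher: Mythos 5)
Your proof is correct and follows essentially the same route as the paper's: dispose of $(3)\Rightarrow(4)$ trivially, get $(4)\Rightarrow(1)$ from Theorem~\ref{thm:coherent_alg_perf_strong_gen} together with the passage (via \cite[Proposition~1.9]{Neeman:2021}) from a strong $\oplus$-generator for $D_{\operatorname{Qcoh}}(\mathcal{A})$ to a strong generator for $\operatorname{perf}(\mathcal{A})$, get $(1)\Rightarrow(2)$ by restricting along the Verdier-localization-up-to-summands of Corollary~\ref{cor:locsequenceperf}, and close the cycle with $(2)\Rightarrow(3)$ by the affine Noetherian case. The $(1)\Rightarrow(2)$ step is fine as written: density of the image of $j^\ast$ plus the fact that $\langle-\rangle_n$ is closed under summands gives exactly what you need.

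The one place you rightly flag as delicate is $(2)\Rightarrow(3)$, and your instinct to double-check the reference is sound. The paper does \emph{not} use Rouquier's Proposition~7.25 here; it cites that result in Theorem~\ref{thm:coherent_alg_perf_strong_gen} for the statement about $D_{\operatorname{Qcoh}}(\mathcal{A}|_U)$ admitting a perfect strong $\oplus$-generator, i.e.\ the big-category version. For the small-category statement you actually need --- that for a Noetherian (noncommutative) ring $A$, $\operatorname{perf}(A)$ admitting a strong generator is equivalent to $A$ having finite global dimension --- the paper instead cites \cite[Proposition~10]{Krause:2024} or \cite[Corollary~4.3.13]{Letz:2020}. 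So your argument is structurally identical to the paper's, but you should swap the citation in the $(2)\Rightarrow(3)$ step to one of those sources (or otherwise supply the extra argument upgrading a strong generator for $\operatorname{perf}(A)$ to a strong $\oplus$-generator for $D(A)$, which would let you fall back on Rouquier).
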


\begin{proof}
    Since for a Noetherian ring having finite global dimension is equivalent to the perfect complexes having a strong generator, see e.g.\ from \cite[Proposition 10]{Krause:2024} or \cite[Corollary 4.3.13]{Letz:2020}, the result follows from Theorem \ref{thm:coherent_alg_perf_strong_gen} (using Corollary \ref{cor:locsequenceperf}).
\end{proof}

\begin{remark}\label{rmk:stong_gen_to_strong_oplus_for_perf}
    With the set-up as in Theorem \ref{thm:coherent_alg_perf_strong_gen} the existence of a perfect strong $\oplus$-generator as in the statement can be checked affine locally.
    In addition, in the set-up of Corollary \ref{cor:alg_perf_strong_gen_weak_version} the existence of a strong generator for the category of perfect complexes can also be checked affine locally.
    Indeed, the respective statements show that this is an affine local condition.
\end{remark}

%%%%%%%%%%%%%%%%%%%%%%%%%%%%%%%%%%%%
\subsection{Azumaya algebras}
\label{sec:applications_azumaya}
%%%%%%%%%%%%%%%%%%%%%%%%%%%%%%%%%%%%

As a first application we prove a result for strong generation of Azumaya nc.\ schemes.
One important consequence of Corollary~\ref{cor:alg_perf_strong_gen_weak_version}, that we exploit, is that it reduces arguments to the affine case and `easier' homological algebra.
More applications can be found in \cite{DeDeyn/Lank/ManaliRahul:2024c}

\begin{corollary}\label{cor:azumaya_detects_regular}
    If $X$ is a separated Noetherian scheme, then the following are equivalent:
    \begin{enumerate}
        \item $\operatorname{perf}(X)$ has finite Rouquier dimension, i.e.\ $X$ is regular of finite Krull dimension.
        \item $\operatorname{perf}(\mathcal{A})$ has finite Rouquier dimension for all Azumaya algebras $\mathcal{A}$ over $X$.
        \item $\operatorname{perf}(\mathcal{A})$ has finite Rouquier dimension for some Azumaya algebra $\mathcal{A}$ over $X$.
    \end{enumerate}
\end{corollary}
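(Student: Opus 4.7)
The plan is to apply Corollary~\ref{cor:alg_perf_strong_gen_weak_version} three times to reduce each of (1), (2), (3) to an affine-local finiteness statement for global dimensions, and then to bridge between an Azumaya algebra and its center via the classical Auslander--Goldman comparison. To begin, observe that for any quasi-compact separated nc.\ scheme $(X,\mathcal{B})$ whose sections on affines are Noetherian, $\operatorname{perf}(\mathcal{B})$ has finite Rouquier dimension if and only if it admits a strong generator; by Corollary~\ref{cor:alg_perf_strong_gen_weak_version} this is in turn equivalent to $\mathcal{B}(U)$ having finite global dimension for every affine open $U\subseteq X$. Specialising to $\mathcal{B}=\mathcal{O}_X$ identifies (1) with the condition that every $\mathcal{O}_X(U)$ has finite global dimension, which for Noetherian separated $X$ amounts exactly to $X$ being regular of finite Krull dimension (the finite Krull dimension being automatic from a finite affine cover).

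The central input is then the classical Azumaya--center comparison: for an Azumaya algebra $A$ over a commutative Noetherian ring $R=Z(A)$, one has $\operatorname{gldim}(A)=\operatorname{gldim}(R)$; in particular one is finite if and only if the other is. The inequality $\operatorname{gldim}(A)\leq\operatorname{gldim}(R)$ is already recalled in Example~\ref{ex:NCexamples}(4). For the reverse, I would pick a faithfully flat étale cover $R\to R'$ splitting $A$, so that $A\otimes_R R'\cong M_n(R')$, whence Morita equivalence gives $\operatorname{gldim}(A\otimes_R R')=\operatorname{gldim}(R')$; flatness of $R'$ over $R$ bounds $\operatorname{gldim}(A\otimes_R R')$ above in terms of $\operatorname{gldim}(A)$, and faithfully flat descent of finite global dimension passes back from $R'$ to $R$. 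Alternatively, since $A$ is a faithfully projective $R$-module and Azumaya algebras are separable over their centre, every $R$-module $M$ appears as a direct summand of a suitable induced $A$-module, yielding $\operatorname{pd}_R(M)\leq\operatorname{gldim}(A)$ directly. This comparison is the main obstacle of the argument, as it is the only genuinely Azumaya-theoretic input required; everything else is bookkeeping.

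With the comparison in hand the cycle closes. For $(1)\Rightarrow(2)$, finiteness of $\operatorname{gldim}\mathcal{O}_X(U)$ on affines transfers to finiteness of $\operatorname{gldim}\mathcal{A}(U)$ for every Azumaya $\mathcal{A}$, and Corollary~\ref{cor:alg_perf_strong_gen_weak_version} delivers a strong generator of $\operatorname{perf}(\mathcal{A})$. Implication $(2)\Rightarrow(3)$ is trivial via $\mathcal{A}=\mathcal{O}_X$, which is trivially Azumaya. Finally for $(3)\Rightarrow(1)$: finite Rouquier dimension of some $\operatorname{perf}(\mathcal{A})$ forces $\operatorname{gldim}\mathcal{A}(U)<\infty$ on affine opens, which by the comparison gives $\operatorname{gldim}\mathcal{O}_X(U)<\infty$ on affines, so (1) follows by another application of Corollary~\ref{cor:alg_perf_strong_gen_weak_version} with $\mathcal{A}=\mathcal{O}_X$.
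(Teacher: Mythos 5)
Your overall route matches the paper's: apply Corollary~\ref{cor:alg_perf_strong_gen_weak_version} to reduce each condition to an affine-local statement about global dimension, and then compare $\operatorname{gldim}(A)$ with $\operatorname{gldim}(R)$ for an Azumaya algebra $A$ over a commutative Noetherian ring $R$. Of your two suggested arguments for the key inequality $\operatorname{gldim}(R)\leq\operatorname{gldim}(A)$, the second one is correct and is essentially what the paper does: since $A$ is finitely generated projective over $R$, a projective $A$-resolution of $\pi^\ast M$ restricts to a projective $R$-resolution of $\pi_\ast\pi^\ast M=M\otimes_R A$, and $M$ is a direct summand of this. The paper sidesteps the separability input you invoke by first shrinking the affine open so that $A$ is actually $R$-\emph{free}, whence $M\otimes_R A\cong M^{\oplus k}$ visibly recovers $M$; your use of the Azumaya splitting $R\hookrightarrow A$ as an $R$-module retract is a fine alternative but needs to be cited or justified.

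Your first argument, however, has a genuine gap. The assertion that flatness of $R\to R'$ bounds $\operatorname{gldim}(A\otimes_R R')$ above by $\operatorname{gldim}(A)$ is false for merely flat ring maps: take $R=k$ a field, $R'=k[x]$ and $A=k$, so that $\operatorname{gldim}(A\otimes_R R')=1>0=\operatorname{gldim}(A)$. To make this step work one needs $R\to R'$ to be \emph{finite} étale, so that $A\to A\otimes_R R'$ is a module-finite separable extension, and then one must run precisely the same direct-summand argument as in your second approach to conclude $\operatorname{gldim}(A\otimes_R R')\leq\operatorname{gldim}(A)$; one must additionally justify that the Azumaya algebra can be split by a single finite étale cover of the affine base. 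As written the step does not close, and once repaired it collapses to your second argument anyway, so I would simply drop it and keep the faithfully-projective/separability argument (or, more simply, localize further to make $A$ free over $R$ as the paper does).
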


\begin{proof}
    Let $\mathcal{A}$ be any Azumaya algebra over $X$ and denote the structure morphism by $\pi$ as usual.
    By Corollary~\ref{cor:alg_perf_strong_gen_weak_version} the statements are local on $X$, so we can reduce to $X=\operatorname{Spec} R$ being affine and $A:=\mathcal{A}(X)$ finite free as $R$-module. 
    Moreover, by loc.\ cit.\ it suffices to prove $A$ has finite global dimension if, and only if, $R$ has finite global dimension. First, suppose $A$ has finite global dimension $n$.
    Let $M$ be any $R$-module. 
    By assumption $\pi^\ast M$ has projective dimension at most $n$. As, again by assumption, the restriction of a projective $A$-module is projective as $R$-module and as restriction is exact, it follows that $\pi_\ast \pi^\ast M$ has projective dimension at most $n$.
    Therefore, since $\pi_\ast \pi^\ast M$ is a finite direct sum of copies of $M$, as $A$ is $R$-free, the same is true for $M$. Conversely, assume $R$ has finite global dimension.
    It then follows immediately from \cite[Theorem 1.8]{Auslander/Goldman:1961} that the global dimension of $A$ is also finite.
\end{proof}

\begin{remark}\label{rmk:global_dim_azumaya_algebra}
    In fact the proof shows that the global dimension of an Azumaya algebra is the same as that of its center. 
    This is really the underlying reason for this result. 
\end{remark}

\begin{remark}
    A fun alternative argument for the converse direction is the following.
    With notation as in the proof, note that $A^e:=A^{op}\otimes_R A$ is Morita equivalent to $R$, see e.g.\ \cite[Theorem 3.4]{DeMeyerIngraham:1971}.
    Hence if $R$ has finite global dimension, so does $A^e$.
    Consequently $A$ has a finite projective resolution $P^\bullet$ of $A^e$-modules.
    The claim then follows as $(-\otimes_A P^\bullet)=(-\otimes_A A)$ being the identity yields that any $A$-module $M$ has a projective resolution of length bounded independently of $M$, c.f.\ \cite[Lemma 3.6]{Lunts:2010}. The argument in loc.\ cit.\ needs a slight modification, though, as there $R$ has global dimension zero whereas here we have finite global dimension.
\end{remark}

\begin{remark}
    It would be interesting to know whether the characterization of Corollary \ref{cor:azumaya_detects_regular} extends to the non-Azumaya classes of the cohomological Brauer group of the separated Noetherian scheme $X$ by looking at the associated twisted derived categories.
    As the Brauer group and cohomological Brauer group coincide when $X$ admits an ample line bundle, so in particular when $X$ is affine, it suffices to have a result showing that strong generation of the perfect complexes for the twisted derived categories can be checked affine locally. (Just as Corollary \ref{introcor:alg_perf_strong_gen_weak_version} shows for Azumaya classes).
    Note that \cite{Toen:2012} has shown such a statement for compact generation, so it would be interesting to see if those methods extend to show strong generation.
\end{remark}

\bibliographystyle{alpha}
\bibliography{mainbib}

\end{document}